\newtheorem{theorem}{Theorem}[section]
\newtheorem{proposition}{Proposition}[section]
\newtheorem{corollary}{Corollary}[section]
\newtheorem{assumption}{Assumption}[section]
\newenvironment{proof}[1][Proof]{\noindent \textbf{#1.} }{\hfill$\Box$\par\medskip}
\newcommand{\beqn}[1]{\begin{equation}\label{#1}}
\newcommand{\eeqn}{\end{equation}}
\definecolor{aau2}{rgb}{0.0, 0.5, 0.69}
\definecolor{aau3}{rgb}{0.0, 0.53, 0.74}
\definecolor{aau4}{rgb}{0.0, 0.48, 0.65}
\definecolor{aau5}{rgb}{0.0, 0.45, 0.73}
\definecolor{rsap}{RGB}{130, 36, 51}
\definecolor{gsap}{RGB}{112, 164, 137}
\definecolor{tud}{rgb}{0.43,0.73,0.11}
\definecolor{verde}{rgb}{0.33,0.53,0.11}
\definecolor{ttffqq}{rgb}{0.0, 0.48, 0.65} 
\definecolor{ffqqqq}{rgb}{0.0, 0.5, 0.69} 
\tikzstyle{decision} = [diamond, draw, fill=blue!20,
\tikzstyle{block} = [rectangle, draw, fill=blue!20,
\tikzstyle{line} = [draw, -latex']
\tikzstyle{cloud} = [draw, ellipse,fill=red!20, node distance=3cm,
\tikzstyle{cloud2} = [draw, ellipse,fill=green!20, node distance=3cm,
\begin{document}

\title{Convergence rates of the stochastic alternating algorithm for bi-objective optimization}

\date{December 28,  2022}

\author{
S. Liu\thanks{Department of Industrial and Systems Engineering,
Lehigh University,
200 West Packer Avenue, Bethlehem, PA 18015-1582, USA
({\tt sul217@lehigh.edu}).}\and
L. N. Vicente\thanks{Department of Industrial and Systems Engineering,
Lehigh University,
200 West Packer Avenue, Bethlehem, PA 18015-1582, USA and
Centre for Mathematics of the University of Coimbra (CMUC)
({\tt lnv@lehigh.edu}). Support for
this author was partially provided by the Centre for Mathematics of the University of Coimbra under grant FCT/MCTES UIDB/MAT/00324/2020.}
}

\maketitle

{\small
\begin{abstract}
Stochastic alternating algorithms for bi-objective optimization are considered when 
optimizing two conflicting functions for which optimization steps have to be applied separately for each function. Such algorithms consist of applying a certain number of steps of gradient or subgradient descent on each single objective at each iteration.
In this paper, we show that stochastic alternating algorithms achieve a sublinear convergence rate of $\mathcal{O}(1/T)$, under strong convexity, for the determination of a minimizer of a weighted-sum of the two functions, parameterized by the number of steps applied on each of them.  An extension to the convex case is presented for which the rate weakens to $\mathcal{O}(1/\sqrt{T})$. These rates are valid also in the non-smooth case. Importantly, by varying the proportion of steps applied to each function, one can determine an approximation to the Pareto front.
\end{abstract}

\bigskip

\begin{center}
\textbf{Keywords:} Multi-Objective Optimization, Pareto Front, Stochastic Optimization, Alternating Optimization. 
\end{center}
}

\section{Introduction}
\label{introduction}

Numerous real-world application scenarios involve several potentially conflicting objectives, which need to be considered simultaneously. Such type of optimization problems is referred to as multi-objective
optimization (MOO). One can find various applications spanning across applied engineering, operations management, finance, economics and social sciences, agriculture, green logistics, and health systems~\cite{NGunantara_2018}. As these objectives are usually competing among each other, it is not possible to find a single solution which is optimal with respect to all objectives. The goal of MOO is to find a set of equally good solutions known as Pareto optimal solutions or efficient points. Roughly speaking for each Pareto optimal solution, there is no other point in the feasible region leading to a simultaneous improvement in all objectives. The determination of the set of Pareto optimal solutions helps decision makers to define the best trade-offs among the several competing criteria.

In this paper, we consider a continuous bi-objective optimization problem
\begin{equation}
\label{biobjective_smooth}
    \min~F(x) = (f^a(x), f^b(x))^\top, \quad x \in \mathcal{X},
\end{equation}
where $\mathcal{X}$ is a nonempty bounded closed convex region. 
We say that the multi-objective problem is smooth if all objective functions~$f^i$ are continuously differentiable. In the convergence theory of our algorithms, we will assume that both functions are convex. Furthermore, both objectives involve randomness in its parameters, in which case we expect that a true gradient or subgradient is not available or too expensive to compute. Instead, one can access a unbiased estimator $g^i(x, \xi), i \in \{a, b\}$, of a true gradient or subgradient of $f^i$, where $\xi$ denotes some random variable.
The optimality in MOO is defined via the concept of Pareto dominance. More precisely, for the general bi-objective problem~\eqref{biobjective_smooth}, we say that $x$ (weakly) dominates~$y$ if $f^i(x) \leq f^i(y), \forall i \in \{a, b\}$, and $F(x) \neq F(y)$. A point~$x \in \mathcal{X}$ is called a (strictly) non-dominated solution or Pareto optimal solution if it is not (weakly) dominated by any other point in~$\mathcal{X}$. The set of non-dominated solutions denoted by~$\mathcal{P}$ forms the so-called Pareto front $F(\mathcal{P})= \{F(x): x \in \mathcal{P}\}$. 

\subsection{A quick overview of multi-objective optimization methods}

Depending on whether the decision maker's preference is explicitly involved during the optimization process, two categories of approaches, namely \textit{a priori} and \textit{a posteriori} methods, are mainly considered in the existing multi-objective optimization literature. 

The \textit{a priori} methods incorporate the preference to reduce the multi-objective problem into a single-objective one, which can then be tackled by classical single-objective optimization algorithms. Such preferences could be expressed via weights put on different objectives, desired upper bounds of some objectives, or utility functions, corresponding to the weight-sum method~\cite{SGass_TSaaty_1955}, the $\epsilon$-constrained method~\cite{YVHaimes_1971}, and the utility function method~\cite{EKBrowning_MAZupan_2020,KMiettinen_2012}. In the weighted-sum method, one assigns each objective a non-negative weight $\lambda_i$ and then optimizes the weighted-sum function $S(x, \lambda) = \sum_i \lambda_i f^i(x)$. By varying the weights in a convex linear combination one is guaranteed to find all Pareto solutions when the functions are convex. The $\epsilon$-constrained method consists of optimizing one objective using given upper bounds, i.e., $\min~f^i(x)$ subject to $f^j(x) \leq \epsilon_j, \forall j \neq i$. The utility function method is a more general approach in this category. A utility function~$U$ is a real function quantifying the overall preference among the competing objectives~\cite{EKBrowning_MAZupan_2020}. Given two candidate solutions $x$ and $y$, then the decision maker prefers $x$ to $y$ if $U(F(x)) > U(F(y))$. 

As for the \textit{a posteriori} methods, the solution process aims at producing all the Pareto optimal solutions or a representative subset of the Pareto optimal solutions, out of which a single ``best'' solution is then selected according to the decision maker's preferences. A technique often used is to maintain and update a list of candidates iteratively based on a {\it heuristic} or {\it rigorous} mechanism. The class of (\textit{a posteriori}) evolutionary multi-objective optimization algorithms~\cite{SBechikh_RDatta_AGupta_2016, CCCoello_2006} using {\it metaheuristics} has been a very
popular research topic since 1990's. Moreover, in the past two decades, many {\it rigorous} descent methods~\cite{EHFukuda_LMGDrummond_2014} have been developed, namely multi-objective versions of the zero-order method~\cite{ALCustodio_etal_2011}, the steepest descent~\cite{JADesideri_2012,JADesideri_2014,LGDrummond_ANIusem_2004, LGDrummond_BFSvaiter_2005, JFliege_BFSvaiter_2000, EHFukuda_LMGDrummond_2013}, the subgradient methods~\cite{GCBento_JXCruzNeto_2013, JYBelloCruz_2013}, the proximal gradient~\cite{GCBento_etal_2014, HBonnel_ANIusem_BFSvaiter_2005, HTanabe_EHFukuda_NYamashita_2019, HTanabe_EHFukuda_NYamashita_2020}, the conjugate gradient methods~\cite{LRLucambioPerez_LFPrudente_2018}, the trust-region methods~\cite{SQu_MGoh_BLiang_2013,KDVillacorta_PROliveira_ASoubeyran_2014}, and the Newton methods~\cite{LGDrummond_FMPRaupp_BFSvaiter_2014, JFliege_LGDrummond_BFSvaiter_2009, ZPovalej_2014, SQu_MGoh_FTSChan_2011}. These methods attempt to decrease the individual objective values simultaneously in some sense, and the convergence of the generated iterates to a Pareto first-order stationary point is rigorously established under reasonable assumptions. Such a common descent direction for all the objectives is usually computed at each iteration by solving a certain sub-problem that takes into account the gradient and/or Hessian information of all the objectives. 

Stochastic multi-objective optimization (SMOO) deals with the case where one or multiple objective functions involve uncertainty or noisy data. One usually formulates the objective functions in the form of expectation with respect to some random variables and approximate them by the sample average approximation (SAA) technique. Two main approaches~\cite{FBAbdelaziz_1992, FBAbdelaziz_2012, RCaballero_2004} for solving the resulting SMOO problems are the \textit{multi-objective} methods and the \textit{stochastic} methods. While the former reduces the SMOO problem into a deterministic MOO problem, the later converts the original problem into a single-objective stochastic problem by scalarization. 

The alternating optimization algorithms considered in this paper belong to the \textit{a priori} category.
Instead of decreasing individual objectives simultaneously at each iteration, such algorithms separately optimize them, one after the other, and we will see in this paper that these algorithms eventually optimize a certain convex linear combination of the two functions. 
In terms of how the stochasticity is handled, our paper uses the stochastic approximation (SA) technique (stochastic gradient-based methods) which has been recently considered for SMOO by~\cite{SLiu_LNVicente_2019, MQuentin_PFabrice_JADesideri_2018}.

An example of stochastic alternating algorithms for bi-objective optimization arises in 
fair clustering, where one tries to capture a trade-off between clustering cost and balance among different demographic groups defined by sensitive attributes
(see~\cite{FChierichetti_etal_2017, SBera_etal_2019, MSchmidt_CSchwiegelshohn_CSohler_2018, IMZiko_etal_2019}). 
A perfectly balanced solution requires that each cluster has exactly the same proportions of each group, but such solution is not necessarily the one that minimizes the clustering cost.
Fair clustering can be formulated as a bi-objective problem~(see~\cite[Problem (2)]{SLiu_LNVicente_2021}), involving two non-convex and non-smooth functions. The problem is originally discrete but a continuous relaxation of the binary assignment variables can be considered. In~\cite{SLiu_LNVicente_2021}, we introduced a stochastic alternating balance fair $k$-means (SAfairKM) approach for fair clustering.
In this application we could not find a reasonable way to optimize a convex linear combination of the two functions. Rather, we were faced with a situation where we knew how to optimize them separately.

\subsection{Contribution of this paper}

In this paper, we establish rates of convergence for stochastic alternating bi-objective optimization algorithms, both in the smooth and non-smooth cases, and under both simple and strong convexity.
It is shown that the algorithms exhibit sublinear convergence rates of $\mathcal{O}(1/T)$ when determining a Pareto solution, under strong convexity and classical assumptions of gradients or subgradients. Moreover, we show that downgrading strong convexity to convexity results in a degradation of the convergence rate from $\mathcal{O}(1/T)$ to $\mathcal{O}(1/\sqrt{T})$. It is remarkable that we recover the same convergence rates of the corresponding single-objective stochastic gradient/subgradient methods. Our theory evolves around an ingenious application of the Intermediate Value Theorem (IVT) to aggregate the steps applied when separately optimizing each function. 
The target Pareto solution is defined by selecting a convex span of the number of steps applied to each function. By varying the weight in this convex span, one can determine the whole Pareto front in the convex case. It is the application of the IVT that brings the convex span of the effort into the convergence rates.

Methods like weighted-sum or $\epsilon$-constrained would be the reasonable choices in most practical applications whenever it is possible to optimize a convex linear combination of the objective functions or optimize one of them subject to the others be within a certain range. 
Our alternating methodology would be chosen when optimizing such a combination is not possible. That is the case when the functions are defined by some form of black-box for which there is a legacy or binary code to carry out the optimization. That is also the case when there is some form of function-dependent descent step~\cite{SLiu_LNVicente_2021}. Our methodology would also be chosen when optimizing such combination is possible but not desirable, which may happen for instance when the functions exhibit a scale that is very different and hard to balance.

\section{Alternating bi-objective optimization algorithms}

The key idea of the alternating bi-objective optimization for solving~\eqref{biobjective_smooth} consists of iteratively taking $n_a$ gradient (or subgradient) descent steps for the first objective and then $n_b$ gradient (or subgradient) descent steps for the second objective. For simplicity, we denote such $n_a + n_b$ steps as one single iteration of the algorithm. The stochastic alternating bi-objective gradient or subgradient algorithm (SA2GD) is formally described in Algorithm~\ref{alg1_SA2GD}. At every iteration $t$, starting at $x_t = y^a_{0, t}$, the algorithm computes two sequences of intermediate iterates, $\{y^a_{r, t}\}_{r=1}^{n_a}$ and $\{y^b_{r, t}\}_{r=1}^{n_b}$. Let $P_{\mathcal{X}}$ denote the orthogonal projection operator that projects 
the new iterate back to the feasible region. Such an operator is well-defined because we assume a compact and convex feasible region.

{\linespread{1.15}\addtocounter{algorithm}{-1}
\renewcommand{\thealgorithm}{{1}} 
\begin{algorithm}[ht]
\caption{Stochastic alternating bi-objective gradient (or subgradient) algorithm (SA2GD)} 
\label{alg1_SA2GD} 
\begin{algorithmic}[1]
\item \textbf{Input:} Initial point $x_0 = y^a_{0, 0}$ and a step size sequence $\{\alpha_t\}$.
\item \textbf{Output:} A likely non-dominated or Pareto solution $x_{T} = y^a_{0, T}$.
\item {\bf for} $t = 0, 1, \ldots, T-1$ {\bf do}
\item \quad {\bf for} $r = 0, \ldots, n_a-1$ {\bf do}
\item \quad \quad Generate a stochastic gradient $g^a(y^a_{r, t}, \xi^r_{t})$.
\item \quad \quad Update $y^a_{r+1, t} = y^a_{r, t} - \alpha_t g^a(y^a_{r, t}, \xi^r_{t})$. 
\item \quad Set $y^b_{0, t} = y^a_{n_a, t}$.
\item \quad {\bf for} $r = 0, \ldots, n_b - 1$ {\bf do}
\item \quad \quad Generate a stochastic gradient $g^b(y^b_{r, t}, \xi^{n_a + r}_{t})$.
\item \quad \quad Update $y^b_{r+1, t} = y^b_{r, t} - \alpha_t g^b(y^b_{r, t}, \xi^{n_a + r}_{t})$. 
\item \quad Set $x_{t+1} = y^a_{0, t+1} = P_{\mathcal{X}}(y^b_{n_b, t})$.
\par\vspace*{0.1cm}
\end{algorithmic}
\end{algorithm}
}

Similarly, in the case where both objectives are non-smooth but sub-differentiable, one can develop a counterpart stochastic alternating bi-objective subgradient method by simply replacing stochastic gradients used in Lines 5 and 9 by stochastic subgradients. 

The stochastic gradient algorithm for SMOO (stochastic multi-gradient) computes one point in the Pareto front (following a steepest descent principle)~\cite{SLiu_LNVicente_2019, MQuentin_PFabrice_JADesideri_2018}. The alternating one also computes just one point. The difference is that the stochastic multi-gradient method is not parameterized or scalarized, and so the only form to use it to compute the whole Pareto front is to incorporate it in a non-dominated list updating mechanism or some form of hotstarting or randomization. The alternating one works like the weighted-sum and $\epsilon$-constrained methods, and it involves parameters such that when varied allow us to capture the whole Pareto front (the optimization effort quantified by $n_a,n_b$).

\section{Convergence analysis}

Our convergence analysis starts with an easier case where both objectives are assumed smooth and strongly convex. When we remove smoothness from the assumptions, the same convergence rate is maintained. Furthermore, in both cases, relaxing strong convexity to convexity leads to degradation of convergence rates from $\mathcal{O}(1/T)$ to $\mathcal{O}(1/\sqrt{T})$. 

\subsection{The smooth and strongly convex case}
\label{proof_sa2gd}

We first assume that both objective functions are smooth and strongly convex and establish a $\mathcal{O}(1/T)$ sublinear convergence rate for the stochastic alternating bi-objective gradient descent (SA2GD) algorithm. At every iteration $t$, starting at $x_t = y^a_{0, t}$, SA2GD computes two sequences of intermediate iterates, $\{y^a_{r, t}\}_{r=1}^{n_a}$ and $\{y^b_{r, t}\}_{r=1}^{n_b}$. We first make a formal assumption on the boundedness of the feasible region.

\begin{assumption}
\label{ass:limit_pts}
\textbf{(Bounded feasible region)}
The feasible region $\mathcal{X}$ is compact, and in particular there exists a positive constant $\Theta$ such that
\begin{equation*}
\displaystyle \max_{x, \bar{x} \in \mathcal{X}} \| x - \bar{x}\| \;\leq\; \textstyle \Theta \;<\; \infty.
\end{equation*}
\end{assumption}

Then, we formalize the classical smoothness assumption of Lipschitz continuity of the gradients.

\begin{assumption}
\label{ass:Lipschitz}
\textbf{(Lipschitz continuous gradients)} The individual true gradients are Lipschitz continuous with Lipschitz constants $L_i > 0, i \in \{a, b\}$, i.e., 
\begin{equation*}
    \begin{split}
        \|\nabla f^i(x) - \nabla f^i(\bar{x}) \| \;\leq\;& L_i \| x - \bar{x} \|, \quad \forall (x, \bar{x}) \in \mathcal{X} \times \mathcal{X}.
    \end{split}
\end{equation*}
\end{assumption}

Note that any continuous function  attains a maximum on a compact set, and thus one can assume the existence of positive constants~$M_\nabla^i$, $i \in \{a,b\}$, such that
\begin{equation} \label{Mnabla}
\| \nabla f^i(x) \|^2 \; \leq \; M_\nabla^i,  \quad \forall x \in \mathcal{X}.  
\end{equation}
In addition to Assumption~\ref{ass:Lipschitz}, we impose strong convexity in both objective functions.

\begin{assumption}
\label{ass:strongconv}
\textbf{(Strong convexity)} For all $i = \{a, b\}$, there exists a scalar $c_i > 0$ such that

\begin{equation*}
f^i(\bar{x}) \;\geq\; f^i(x) + \nabla f^i(x)^\top(\bar{x} - x) + \frac{c_i}{2} \| \bar{x} - x \|^2, \quad \forall (x,\bar{x}) \in \mathcal{X} \times \mathcal{X}.
\end{equation*}
\end{assumption}

Note that based on the above assumption, the weighted-sum function $S(x, \lambda) = \lambda f^a(x) + (1-\lambda)f^b(x), \lambda \in [0, 1]$, is also strongly convex with constant $c = \min(c_a, c_b)$. Given the individual stochastic gradients $g^i(x_t, \xi_t), \forall i \in \{a, b\}$, generated with random variable~$\xi_t$, we use $\mathbb{E}_{\xi_t}[\cdot]$ to denote the conditional expectation taken with respect to $\xi_t$. We also impose the following two classical assumptions of stochastic gradients. 

\begin{assumption}
\label{ass:gradient}
For both objective functions $i \in \{a, b\}$, and all iterates $t \in \mathbb{N}$, the individual stochastic gradients~$g^i(x_t, \xi_t)$ satisfy the following:
\begin{enumerate}
    \item[(a)]\textbf{(Unbiased gradient estimation)} 
    $\mathbb{E}_{\xi_t}[g^i(x_t, \xi_t)] \;=\; \nabla f^i(x_t), ~\forall i \in \{a, b\}$. 
    \item[(b)]\textbf{(Bound on the second moment)}
There exist positive scalars $G_i > 0$ and $\bar{G}_i > 0$ such that
\begin{equation*}
\begin{split}
\mathbb{E}_{\xi_t}[ g^i(x_t, \xi_t)\|^2] \;\leq\; G_i + \bar{G}_i\| \nabla f^i(x_t)\|^2, \quad \forall i \in \{a, b\}.
\end{split}
\end{equation*}
\end{enumerate}
\end{assumption}

The above assumptions are the commonly used ones in classical stochastic gradient methods~\cite{LBottou_FECurtis_JNocedal_2018}, basically assuming reasonable bounds on the expectation and variance of the individual stochastic gradients. 

Our main result stated below shows that SA2GD drives the expected optimality gap of the weighted-sum function $S(\cdot, \lambda(n_a, n_b))$ to zero at a sublinear rate of $1/T$, where $\lambda(n_a, n_b) = n_a/(n_a + n_b)$, when using a decaying step size sequence. By varying $n_a$ and $n_b$ in $\{1, \ldots, n_{\text{total}}\}$, such that $n_{\text{total}} = n_a + n_b$, one can capture the entire trade-off between $f^a$ and $f^b$.

\begin{theorem} (\textbf{Sublinear convergence rate of SA2GD in the smooth and strongly convex case})
\label{th:smooth_strongcov}
Let Assumptions~\ref{ass:limit_pts}-\ref{ass:gradient} hold and $x_*$ be the unique minimizer of the weighted function $S(\cdot, \lambda_*)$ in~$\mathcal{X}$, where $\lambda_* = \lambda(n_a, n_b) = n_a/(n_a + n_b)$. Choosing a diminishing step size sequence
$\alpha_t = \frac{2}{c(t+1)(n_a + n_b)}$, the sequence of iterates generated by the SA2GD algorithm satisfies
\begin{equation*}
    \min_{t = 1, \ldots,T} \mathbb{E}[S(x_t, \lambda_*)] - S(x_*, \lambda_*) \;\leq\; \frac{4}{c(T + 1)} \left(\hat{G}^2 +  L\Theta\hat{G}\right).   
\end{equation*}
 where $\hat{G} = \sqrt{G + \bar{G}M_{\nabla}}$, $L = \max(L_a, L_b)$, $G = \max(G_a, G_b)$, $\bar{G} = \max(\bar{G}_a, \bar{G}_b)$, and $M_\nabla = \max(M_\nabla^a, M_\nabla^b)$.
\end{theorem}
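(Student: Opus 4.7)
Write $N = n_a + n_b$. The plan is to reduce the alternating method to an effective strongly-convex stochastic gradient method on the weighted-sum $S(\cdot,\lambda_*)$ with ``merged'' step size $\beta_t := N\alpha_t = 2/(c(t+1))$, and then invoke the classical $\mathcal{O}(1/T)$ bound for that setting. First, for each single inner update $y^i_{r+1,t} = y^i_{r,t} - \alpha_t g^i(y^i_{r,t},\xi)$, I expand $\|y^i_{r+1,t}-x_*\|^2$ and take conditional expectation; Assumption~\ref{ass:gradient} together with the bound~\eqref{Mnabla} give the one-step descent $\mathbb{E}\|y^i_{r+1,t}-x_*\|^2 \leq \mathbb{E}\|y^i_{r,t}-x_*\|^2 - 2\alpha_t \mathbb{E}[\nabla f^i(y^i_{r,t})^\top(y^i_{r,t}-x_*)] + \alpha_t^2 \hat{G}^2$. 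Telescoping across the $n_a$ inner steps on $f^a$ and then the $n_b$ inner steps on $f^b$ (using $y^b_{0,t}=y^a_{n_a,t}$ and the non-expansiveness of $P_{\mathcal{X}}$) produces $\mathbb{E}\|x_{t+1}-x_*\|^2 \leq \mathbb{E}\|x_t-x_*\|^2 - 2\alpha_t(T_1+T_2) + N\alpha_t^2 \hat{G}^2$, where $T_1 := \sum_{r=0}^{n_a-1}\mathbb{E}[\nabla f^a(y^a_{r,t})^\top(y^a_{r,t}-x_*)]$ and $T_2$ is defined analogously for $f^b$.

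The heart of the proof is to show that $T_1+T_2$ equals, up to an $\mathcal{O}(\alpha_t)$ error, the aggregated quantity $N\,\nabla S(x_t,\lambda_*)^\top(x_t-x_*)$---this is where the convex weighting $\lambda_* = n_a/N$ enters the analysis, via the identity $n_a\nabla f^a(x_t)+n_b\nabla f^b(x_t) = N\,\nabla S(x_t,\lambda_*)$. I split each cross-term by writing $y^i_{r,t}-x_* = (x_t-x_*)+(y^i_{r,t}-x_t)$ and $\nabla f^i(y^i_{r,t}) = \nabla f^i(x_t) + [\nabla f^i(y^i_{r,t})-\nabla f^i(x_t)]$; the leading piece collapses to $N\,\nabla S(x_t,\lambda_*)^\top(x_t-x_*)$, while the residual cross-terms are controlled using Assumption~\ref{ass:Lipschitz} and \eqref{Mnabla}, together with an intermediate-value / mean-value bound on the drift $\|y^i_{r,t}-x_t\|$ along the piecewise-linear path of inner iterates, giving $\mathbb{E}\|y^a_{r,t}-x_t\| \leq r\alpha_t\hat{G}$ (plus an extra $n_a\alpha_t\hat{G}$ contribution for the $b$-iterates, which are centered at $y^a_{n_a,t}$ rather than $x_t$). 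Invoking strong convexity of $S(\cdot,\lambda_*)$ with constant $c=\min(c_a,c_b)$, namely $\nabla S(x_t,\lambda_*)^\top(x_t-x_*) \geq [S(x_t,\lambda_*)-S(x_*,\lambda_*)] + \tfrac{c}{2}\|x_t-x_*\|^2$, and collecting terms produces, with $\Delta_t := \mathbb{E}[S(x_t,\lambda_*)]-S(x_*,\lambda_*)$ and some absolute constant $C_0$,
\begin{equation*}
\mathbb{E}\|x_{t+1}-x_*\|^2 \;\leq\; (1-c\beta_t)\,\mathbb{E}\|x_t-x_*\|^2 - 2\beta_t\,\Delta_t + C_0\,\beta_t^2\,(\hat{G}^2+L\Theta\hat{G}).
\end{equation*}

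With $\beta_t = 2/(c(t+1))$ the above is a textbook strongly-convex SGD recursion; the standard trick of multiplying by $t(t+1)$ and telescoping from $t=0$ to $T-1$, then dropping non-negative remainders, yields the announced bound $\min_{t=1,\ldots,T} \Delta_t \leq \frac{4}{c(T+1)}(\hat{G}^2+L\Theta\hat{G})$. The main obstacle will be the aggregation step: carefully bookkeeping the cross-terms produced when replacing $\nabla f^i(y^i_{r,t})$ by $\nabla f^i(x_t)$, summing the resulting drift bounds $\sum_r \mathbb{E}\|y^i_{r,t}-x_t\|$ over both inner loops (with the $b$-loop inheriting an extra drift of order $n_a\alpha_t\hat{G}$ from the preceding $a$-loop), and verifying that the aggregated error scales precisely as $\alpha_t^2 N^2 L\Theta\hat{G} = \beta_t^2 L\Theta\hat{G}$ so that it absorbs cleanly into the $\beta_t^2$ coefficient. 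It is precisely this IVT-based aggregation that produces the convex-weighting factor $\lambda_* = n_a/N$ in the limit object $S(\cdot,\lambda_*)$, as announced in the introduction. A related technical nuisance is that intermediate iterates $y^i_{r,t}$ may leave $\mathcal{X}$, requiring either an implicit extension of Assumptions~\ref{ass:limit_pts}-\ref{ass:Lipschitz} to a neighborhood of $\mathcal{X}$ or a prior bound showing that the inner drift is uniformly small enough to keep those iterates effectively bounded.
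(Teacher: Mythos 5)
Your overall architecture is the same as the paper's: collapse the $n_a+n_b$ inner steps of one outer iteration into a single SGD-type recursion for $S(\cdot,\lambda_*)$ with merged step $\beta_t=(n_a+n_b)\alpha_t$, use the identity $n_a\nabla f^a(x_t)+n_b\nabla f^b(x_t)=(n_a+n_b)\nabla_x S(x_t,\lambda_*)$, bound the drift of the inner iterates by $O(\alpha_t)$ times stochastic-gradient norms, invoke strong convexity of the weighted sum with $c=\min(c_a,c_b)$, and finish with the standard $2/(c(t+1))$ weighted telescoping. The execution of Part I differs, though. The paper expands the \emph{whole block} displacement at once, so every gradient $\nabla f^i(y^i_{r,t})$ is paired with the same vector $x_t-x_*$, and the only residual is $[\nabla f^i(w^i_t)-\nabla f^i(x_t)]^\top(x_t-x_*)$, bounded via Assumption~\ref{ass:Lipschitz} by $L\Theta$ times the drift (the IVT bundling of Proposition~\ref{prop_ivt} is a convenience here; your term-by-term comparison against $\nabla f^i(x_t)$ would serve the same purpose in the smooth case). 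Your per-inner-step telescoping instead pairs $\nabla f^i(y^i_{r,t})$ with $y^i_{r,t}-x_*$, which creates two extra families of residuals once you re-center at $x_t$: terms $\nabla f^i(x_t)^\top(y^i_{r,t}-x_t)$, of total size $O(\beta_t^2\sqrt{M_\nabla}\,\hat G)$, and terms $[\nabla f^i(y^i_{r,t})-\nabla f^i(x_t)]^\top(y^i_{r,t}-x_t)$, of total size $O(L\hat G^2\beta_t^3)$ in the recursion.

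This is where the gap lies: the theorem claims the specific bound $\frac{4}{c(T+1)}(\hat G^2+L\Theta\hat G)$, and your sketch ends with ``some absolute constant $C_0$,'' which does not prove the statement as written. With your decomposition, recovering the stated constant is not automatic: you would need $\sqrt{M_\nabla}\le\hat G$ (true if $\bar G\ge 1$ or if $M_\nabla$ is taken as the tight supremum in~\eqref{Mnabla}, but not asserted), and the $O(\beta_t^3)$ remainder does not fold into $C\beta_t^2(\hat G^2+L\Theta\hat G)$ with $C$ depending only on the theorem's quantities — absorbing it via $\beta_t\le\beta_1$ introduces an $L\hat G^2/c$ factor, and carrying it through the summation leaves an extra lower-order additive term; either way the displayed inequality is not literally obtained. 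The clean fix is precisely the paper's pairing: expand $x_{t+1}-x_*$ over the whole block so all gradients meet $(x_t-x_*)$; then the only error terms are $2\beta_t^2\hat G^2$ (from the second moment of the summed stochastic gradients) and $2\beta_t^2L\Theta\hat G$ (from the Lipschitz comparison at the drifted points, as in~\eqref{part2_eq8}), giving $M=2(n_a+n_b)^2(\hat G^2+L\Theta\hat G)$ and hence the constant $4$. Your final caveat that the intermediate iterates $y^i_{r,t}$ may leave $\mathcal{X}$ is a fair observation, but it applies equally to the paper's own proof, which also evaluates~\eqref{Mnabla}, Assumption~\ref{ass:Lipschitz}, and Assumption~\ref{ass:gradient} at the unprojected points; it is not a defect specific to your route.
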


\begin{proof}
The proof is divided in three parts for better organization and understanding. In the first part, one obtains an upper bound on the norm of the iterates, $\mathbb{E}_{\xi_t}[\|x_{t+1} - x_*\|^2]$. Strong convexity of the weighted-sum function is applied in the second part. The third part concludes the proof using standard arguments. For simplicity, we let $z_t = y^b_{0, t} = y^a_{n_a, t}$ be the intermediate point at each iteration.  \vspace{1ex}

\noindent {\bf Part I: Bound on the iterates error using the Intermediate Value Theorem.} 
At any iteration $t$, the sequence of stochastic gradients is computed from drawing the sequence of random variables $\xi_t = \{\xi^0_t, \ldots, \xi^{n_a + n_b - 1}_t \}$. We have
\begin{align*}
    x_{t+1} - x_* & \;=\; P_{\mathcal{X}}\left(z_t - \alpha_t \sum_{r = 0}^{n_b - 1} g^b(y^b_{r, t}, \xi^r_t)\right) - x_*, \\
    &\;=\;P_{\mathcal{X}}\left(
    x_t - \alpha_t \sum_{r = 0}^{n_a - 1}  g^a(y^a_{r, t}, \xi^r_t) - \alpha_t \sum_{r = 0}^{n_b - 1}  g^b(y^b_{r, t}, \xi^{n_a + r}_t) \right) - x_*.
\end{align*}

Since the sequence $\xi_t$ is drawn independently, using Assumption~\ref{ass:gradient} (a) one has
\[\mathbb{E}_{\xi_t}[g^a(y_{r, t}^a, \xi_{t}^r)] \;=\; \mathbb{E}_{\xi_t^0, \ldots, \xi_{t}^{r-1}}[\mathbb{E}_{\xi_t^r}[g^a(y_{r, t}^a, \xi_{t}^r)]] \;=\; \mathbb{E}_{\xi_t^0, \ldots, \xi_{t}^{r-1}}[\nabla f^a(y_{r, t}^a)] \;=\; \mathbb{E}_{\xi_t}[\nabla f^a(y_{r, t}^a)],\]
where the last equality holds due to the independence between $y_{r, t}^a$ and $\{\xi_t^{r}, \ldots, \xi_t^{n_a + n_b - 1}\}$. Similarly, we have $\mathbb{E}_{\xi_t}[g^b(y_{r, t}^b, \xi_{t}^{n_a + r})] = \mathbb{E}_{\xi_t}[\nabla f^b(y_{r, t}^b)]$.
Then, taking square norms and expectations over the random variables $\xi_t$ on both sides yields
\begin{equation}
\label{theorem2_eq1}
    \begin{split}
    \mathbb{E}_{\xi_t}[\|x_{t+1} - x_*\|^2]  
     \;\leq\; & \|x_t - x_*\|^2 + \alpha_t^2\mathbb{E}_{\xi_t}[\|\sum_{r = 0}^{n_a - 1} g^a(y^a_{r, t}, \xi^r_t) + \sum_{r = 0}^{n_b - 1} g^b(y^b_{r, t}, \xi^{n_a + r}_t)\|^2] \\
     & - \mathbb{E}_{\xi_t}[2\alpha_t(x_t - x_*)^\top (\sum_{r = 0}^{n_a - 1} \nabla f^a(y^a_{r, t}) + \sum_{r = 0}^{n_b - 1} \nabla f^b(y^b_{r, t}))],
    \end{split}
\end{equation}
which holds by the non-expansiveness property of the orthogonal projection operator.

We now claim, by applying a version of the Intermediate Value Theorem given in Proposition~\ref{prop_ivt}, that the last term of the right-hand side in~\eqref{theorem2_eq1} can be written as $- 2\alpha_t(x_t - x_*)^\top \mathbb{E}_{\xi_t}[n_a \nabla f^a(w_t^a) + n_b \nabla f^b(w_t^b)]$ for some $w_t^a$ and $w_t^b$. In fact, we apply Proposition~\ref{prop_ivt} to the real continuous function $\phi^i(y) = -2\alpha_t(x_t - x_*)^\top \nabla f^i(y)$, from which we then know that $w_t^i$ is a convex linear combination of a sequence of points $\{y_{r, t}^i\}_{r = 0}^{n_i - 1}$ for both $i \in \{a, b\}$. 

Using a combination of~(\ref{Mnabla}) and Assumption~\ref{ass:gradient}~(b), the bound for the second moment of the stochastic gradients is given by
\begin{equation} \label{SGbound}
\mathbb{E}_{\xi_t}[\|g^i(y^i_{r, t}, \xi_t^r)\|^2] \; \leq \; G + \bar{G}M_{\nabla}, \quad
\mathbb{E}_{\xi_t}[\|g^i(y^i_{r, t}, \xi_t^r)\|] \; \leq \; \sqrt{G + \bar{G}M_{\nabla}},
\end{equation} 
where the second bound results from applying Jensen's inequality to the first one.
Hence, the second term in the right-hand side of~\eqref{theorem2_eq1} can be bounded by
\begin{equation*}
\begin{split}
   & \mathbb{E}_{\xi_t}[\|\sum_{r = 0}^{n_a - 1} g^a(y^a_{r, t}, \xi^r_t) + \sum_{r = 0}^{n_b - 1}  g^b(y^b_{r, t}, \xi^{n_a + r}_t)\|^2] \\
   &  \;\leq\; 2\mathbb{E}_{\xi_t}[\|\sum_{r = 0}^{n_a - 1}  g^a(y^a_{r, t}, \xi^r_t)\|^2] + 2\mathbb{E}_{\xi_t}[\|\sum_{r = 0}^{n_b - 1}  g^b(y^b_{r, t}, \xi^{n_a+r}_t)\|^2] \\
& \;\leq\;  2n_a\sum_{r = 0}^{n_a - 1} \mathbb{E}_{\xi_t}[\| g^a(y^a_{r, t}, \xi^r_t)\|^2] + 2n_b\sum_{r = 0}^{n_b - 1} \mathbb{E}_{\xi_t}[\| g^b(y^b_{r, t}, \xi^{n_a + r}_t)\|^2] \\
& \;\leq\;  2(n_a + n_b)^2(G + \bar{G}M_{\nabla}).
\end{split}
\end{equation*}
We thus arrive at
\begin{equation}
    \label{theorem2_eq2}
    \begin{split}
    \mathbb{E}_{\xi_t}[\|x_{t+1} - x_*\|^2]
    \;\leq\; &  \|x_t - x_*\|^2 + 2\alpha_t^2(n_a + n_b)^2(G + \bar{G}M_{\nabla}) \\
     & - 2\alpha_t(x_t - x_*)^\top \mathbb{E}_{\xi_t}[n_a \nabla f^a(w_t^a) + n_b \nabla f^b(w_t^b)].
    \end{split}
\end{equation}

By adding and subtracting $2\alpha_t(x_t - x_*)^\top(n_a\nabla f^a(x_t) + n_b \nabla f^b(x_t))$ in the right-hand side of~\eqref{theorem2_eq2}, we further rewrite it as
\begin{equation}
\label{theorem2_eq22}
    \begin{split}
    \mathbb{E}_{\xi_t}[\|x_{t+1} - x_*\|^2]
    \;\leq\; &  \|x_t - x_*\|^2 + 2\alpha_t^2(n_a + n_b)^2(G + \bar{G}M_{\nabla}) \\
     & - 2\alpha_t(x_t - x_*)^\top (n_a \nabla f^a(x_t) + n_b \nabla f^b(x_t)) \\
     & + 2\alpha_t\|x_t - x_*\|\mathbb{E}_{\xi_t}[\|n_a \nabla f^a(w_t^a) - n_a \nabla f^a(x_t)\|] \\
     & + 2\alpha_t\|x_t - x_*\|\mathbb{E}_{\xi_t}[\|n_b \nabla f^b(w^b_t) - n_b \nabla f^b(x_t)\|]. 
    \end{split}
\end{equation}
Note that the last two terms are derived by the Cauchy–Schwarz and Jensen's inequalities.

\vspace{2ex}
\noindent {\bf Part II: Using strong convexity.} 
Selecting $\lambda_* = \lambda(n_a, n_b) = n_a/(n_a + n_b)$, by the strong convexity of the weighted-sum function, one has
\begin{equation*}
 \nabla_x S(x_t, \lambda_*)^\top(x_t - x_*)  \;\geq\; S(x_t, \lambda_*) - S(x_*, \lambda_*) + \frac{c}{2}\| x_t - x_*\|^2,
\end{equation*}
which is equivalent to
\begin{equation}
\label{theorm2_eq3}
 (x_t - x_*)^\top(n_a \nabla f^a(x_t) + n_b \nabla f^b(x_t))  \;\geq\; (n_a + n_b)(S(x_t, \lambda_*) - S(x_*, \lambda_*) + \frac{c}{2}\| x_t - x_*\|^2).
\end{equation}

From Assumption~\ref{ass:Lipschitz}, we obtain a bound for the last two terms of~\eqref{theorem2_eq22} in the form 
\begin{equation}
    \label{theorem2_eq4}
    2\alpha_t\|x_t - x_*\|\mathbb{E}_{\xi_t}[\|n_i \nabla f^i(w_t^i) - n_i \nabla f^i(x_t)\|] \;\leq\;  2\alpha_tLn_i\|x_t - x_*\|\mathbb{E}_{\xi_t}[\|x_t - w_t^i\|], \forall i \in \{a, b\}.
\end{equation}
According to Proposition~\ref{prop_ivt}, $w_t^i$ is a convex linear combination of a sequence of points $\{y_{r, t}^i\}_{r = 0}^{n_i -1}$ for $i \in \{a, b\}$. One can write $w_t^i = \sum_{r=0}^{n_i - 1} \beta_r y_{r, t}^i$ with $\beta_r \geq 0$, $r = 0, \ldots, n_i - 1$ , and $\sum_{r=0}^{n_i - 1} \beta_r = 1$. An explicit upper bound of $\|x_t - w_t^i\|$ can then be derived as follows
\begin{equation}
\label{part2_eq2}
    \begin{split} 
        \|x_t - \sum_{r=0}^{n_i - 1} \beta_r y_{r, t}^i\| & \;=\; \|\sum_{r=0}^{n_i - 1} \beta_r (x_t - y_{r, t}^i)\| \;\leq\; \sum_{r=0}^{n_i - 1} \beta_r \|x_t - y_{r, t}^i\|.
    \end{split}
\end{equation}
Using $y_{r, t}^i = y_{0, t}^i - \sum_{j = 0}^{r-1}\alpha_t g^i(y_{j, t}^i, \xi_{t}^j)$ and applying the triangle inequality, we have
\begin{align}
    \|x_t - y_{r, t}^a\| \;\leq\; \alpha_t \sum_{j = 0}^{r-1} \|g^a(y^a_{j, t}, \xi_{t}^j)\|,\label{part2_eq3}
\end{align}  
and
\begin{equation}
\label{part2_eq4}
\begin{split}
    \|x_t - y_{r, t}^b\| & \;=\; \|x_t - y_{n_a, t}^a + \sum_{j = 0}^{r-1}\alpha_t g^b(y_{j, t}^b, \xi_{t}^{n_a + j})\| \\
    & \;\leq\; \alpha_t \sum_{j = 0}^{n_a-1} \|g^a(y^a_{j, t}, \xi_{t}^j)\| +  \alpha_t \sum_{j = 0}^{r-1} \|g^b(y^b_{j, t}, \xi_{t}^{n_a + j})\|.
\end{split}
\end{equation}
Plugging~\eqref{part2_eq3} into~\eqref{part2_eq2} with $i = a$ results in (note that $y^a_{0,t}=x_t$)
\begin{equation}\label{part2_eq5}
\begin{split}
       \|x_t - w_t^a\| & \;\leq\; \alpha_t \sum_{r=1}^{n_a - 1} \beta_r \sum_{j = 0}^{r-1} \|g^a(y^a_{j, t}, \xi_{t}^j)\| \\ & \;  = \;  \alpha_t 
       \sum_{j=0}^{n_a - 1} \|g^a(y^a_{j, t}, \xi_{t}^j)\|
      \sum_{r=j+1}^{n_a-1} \beta_r
       \;\leq \; \alpha_t  \sum_{j=0}^{n_a - 1} \|g^a(y^a_{j,t}, \xi_{t}^{j})\|.
\end{split}
\end{equation}

Applying expectations on both sides of~\eqref{part2_eq5}, followed by the bound~(\ref{SGbound}) on the second moment of the stochastic gradients,
\begin{equation} \label{part2_eq6}
\mathbb{E}_{\xi_t}[\|x_t - w_t^a\|] \; \leq \; \alpha_t n_a\sqrt{G + \bar{G}M_{\nabla}}.
\end{equation}
Similarly, merging~\eqref{part2_eq4} into~\eqref{part2_eq2} with $i = b$ leads to 
\begin{equation}
\label{part2_eq6.5}
        \|x_t - w_t^b\| \;\leq \; \alpha_t \sum_{j=0}^{n_a - 1} \|g^a(y^a_{j,t}, \xi_{t}^{j})\| + \alpha_t  \sum_{j=0}^{n_b - 1} \|g^b(y^b_{j,t}, \xi_{t}^{n_a+j})\|,
\end{equation}
and applying expectations
\begin{equation}
\label{part2_eq7}
        \mathbb{E}_{\xi_t}[\|x_t - w_t^b\|] \;\leq \; \alpha_t (n_a+n_b) \sqrt{G + \bar{G}M_{\nabla}}.
\end{equation}
Finally, combining~\eqref{theorem2_eq4}, \eqref{part2_eq6}, and~\eqref{part2_eq7} yields
\begin{equation}
\label{part2_eq8}
    \begin{split}
        2\alpha_t\|x_t - x_*\|\sum_{i \in \{a, b\}} \mathbb{E}_{\xi_t}[\|n_i \nabla f^i(w_t^i) - n_i \nabla f^i(x_t)\|] \;\leq\; 2\alpha_t^2L\Theta (n_a + n_b)^2\sqrt{G + \bar{G}M_{\nabla}}.
    \end{split}
\end{equation}

\vspace{2ex}
\noindent {\bf Part III: Bound on the optimality gap in terms of weighted-sum function.}
Applying inequalities \eqref{theorm2_eq3} and \eqref{part2_eq8} to \eqref{theorem2_eq22} leads to 
\begin{equation}
\label{smooth_sc_part3_eq1}
    \begin{split}
     \mathbb{E}_{\xi_t}[\|x_{t+1} - x_*\|^2]  \;\leq\; &  (1-\alpha_t(n_a + n_b)c) \|x_t - x_*\|^2 -  2\alpha_t(n_a + n_b)(S(x_t, \lambda_*) - S(x_*, \lambda_*)) \\
     & + \alpha^2_t M,
    \end{split}
\end{equation}
where we let $M = 2(n_a + n_b)^2 (G + \bar{G}M_{\nabla} +  L\Theta\sqrt{G + \bar{G}M_{\nabla}})$. Plugging in $\alpha_t = \frac{2}{c(n_a+n_b)(t+1)}$ and rearranging the last inequality result in 
\begin{equation*}
    \begin{split}
        S(x_t, \lambda_*) - S(x_*, \lambda_*)  \;\leq\; & \frac{(1-\alpha_t(n_a + n_b)c) \|x_t - x_*\|^2 - \mathbb{E}_{\xi_t}[\|x_{t+1} - x_*\|^2] + \alpha^2_t M}{2\alpha_t(n_a + n_b)} \\
        \;\leq\; & \frac{c(t-1)}{4}\|x_t - x_*\|^2  - \frac{c(t+1)}{4} \mathbb{E}_{\xi_t}[\|x_{t+1} - x_*\|^2] + \frac{\tilde{M}}{c(t+1)},
    \end{split}
\end{equation*}
where $\tilde{M} = \frac{M}{(n_a + n_b)^2}$. By taking total expectation over $\{\xi_t\}$, multiplying both sides by $t$, and summing over $t = 1, \ldots, T$, one obtains
\begin{equation*}
    \begin{split}
       \sum_{t=1}^T 
       t(\mathbb{E}[S(x_t, \lambda_*)] - S(x_*, \lambda_*)) \;\leq\; & \sum_{t=1}^T \left ( \frac{ct(t-1)}{4}\mathbb{E}[\|x_t - x_*\|^2]  - \frac{ct(t+1)}{4} \mathbb{E}[\|x_{t+1} - x_*\|^2]\right) \\
       & + \sum_{t=1}^T \frac{\tilde{M}t}{c(t+1)} \\
       \;\leq\; & -\frac{cT(T+1)}{4} \mathbb{E}[\|x_{T+1} - x_*\|^2] + \sum_{t=1}^T \frac{\tilde{M}t}{c(t+1)}  \;\leq\; \frac{T}{c} \tilde{M}. 
    \end{split}
\end{equation*}
Dividing both sides of the last inequality by $\sum_{t=1}^T t$ yields
\begin{equation*}
    \min_{t = 1, \ldots,T} \mathbb{E}[S(x_t, \lambda_*)] - S(x_*, \lambda_*) \;\leq\; \frac{2}{c(T + 1)} \tilde{M},   
\end{equation*}
which concludes the proof.
\end{proof}

 The key of the proof was to bound the deviation from $w^i_t$ to $x_t$ using the step size $\alpha_t$. It is in fact the use of the decaying step size that compensates for the error $\mathcal{O}(\alpha_t^2)$ generated when bundling the gradients using the Intermediate Value Theorem. Note that the above theorem is still guaranteed when one relaxes Assumption~\ref{ass:strongconv} to the case where both objectives are convex and one of them is strongly convex, as~\eqref{theorm2_eq3} still holds in such a case. A rate in terms of the iterates can be also derived.
 
\begin{corollary}
\label{th:smooth_convex_iterate}
 Let Assumptions~\ref{ass:limit_pts}-\ref{ass:gradient} hold and $x_*$ be the unique minimizer of the weighted function $S(\cdot, \lambda_*)$ in~$\mathcal{X}$, where $\lambda_* = \lambda(n_a, n_b) = n_a/(n_a + n_b)$. Choosing a diminishing step size sequence
$\alpha_t = \gamma /t$ where $\gamma > \frac{1}{2(n_a + n_b)c}$, the sequence of iterates generated by the SA2GD algorithm satisfies
\begin{equation*}
    \mathbb{E}[\|x_T - x_*\|^2] \;\leq\; \frac{\max\{2\gamma^2M(2c (n_a + n_b) \gamma -1)^{-1}, \|x_0-x_*\|^2\}}{T}.   
\end{equation*}
 where $M = 2(n_a + n_b)^2 (G + \bar{G}M_{\nabla} + L\Theta\sqrt{G + \bar{G}M_{\nabla}})$.
\end{corollary}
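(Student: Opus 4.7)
The plan is to piggyback on the one-step recursion already derived in the proof of Theorem 3.1 and then sharpen it using strong convexity, before closing with a standard Chung/Robbins--Monro-style induction. Everything in Parts I--II of the previous proof goes through unchanged, so the inequality \eqref{smooth_sc_part3_eq1},
$$\mathbb{E}_{\xi_t}[\|x_{t+1}-x_*\|^2] \;\leq\; (1-\alpha_t(n_a+n_b)c)\,\|x_t-x_*\|^2 \;-\; 2\alpha_t(n_a+n_b)(S(x_t,\lambda_*)-S(x_*,\lambda_*)) \;+\; \alpha_t^2 M,$$
is the natural starting point.

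Instead of discarding the optimality-gap term (as one does to obtain a function-value rate), I would fold it back into the contraction factor by invoking strong convexity of $S(\cdot,\lambda_*)$ at its unique minimizer $x_*$, namely $S(x_t,\lambda_*)-S(x_*,\lambda_*) \geq (c/2)\|x_t-x_*\|^2$. Substituting yields the strengthened one-step inequality
$$\mathbb{E}_{\xi_t}[\|x_{t+1}-x_*\|^2] \;\leq\; (1-2\alpha_t(n_a+n_b)c)\,\|x_t-x_*\|^2 + \alpha_t^2 M.$$
Taking total expectation, writing $a_t := \mathbb{E}[\|x_t-x_*\|^2]$ and $\rho := 2c(n_a+n_b)\gamma$, and plugging in $\alpha_t = \gamma/t$ collapses everything to the scalar recursion
$$a_{t+1} \;\leq\; (1-\rho/t)\, a_t + \gamma^2 M/t^2, \qquad \rho>1,$$
with the condition $\rho>1$ coming directly from the hypothesis $\gamma > 1/(2c(n_a+n_b))$.

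The last step is a short induction on $t$ establishing $a_t \leq Q/t$, where $Q = \max\{2\gamma^2 M/(\rho-1),\ \|x_0-x_*\|^2\}$. The base case is covered by the second entry of the max. For the inductive step, assuming $a_t \leq Q/t$ one has $a_{t+1} \leq Q/t - (\rho Q - \gamma^2 M)/t^2$, and it suffices to verify $(\rho Q-\gamma^2 M)(t+1) \geq Qt$, i.e.\ $((\rho-1)Q-\gamma^2 M)\,t + (\rho Q - \gamma^2 M) \geq 0$. The choice $Q \geq 2\gamma^2 M/(\rho-1)$ makes both the coefficient of $t$ and the constant term nonnegative, so the induction closes with room to spare for every $t\geq 1$.

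The main technical point to get right is the ``factor of two'': without invoking strong convexity on the optimality-gap term, one would only obtain the weaker contraction $(1-\alpha_t(n_a+n_b)c)$ and would need $\gamma > 1/(c(n_a+n_b))$ rather than the stated $\gamma > 1/(2c(n_a+n_b))$. A secondary nuisance is the starting index, since $\alpha_t = \gamma/t$ is undefined at $t=0$; this is painlessly absorbed by letting the induction run for $t\geq 1$ and using the $\|x_0-x_*\|^2$ entry in $Q$ to cover the initialization. No new assumptions beyond those of Theorem 3.1 are required, and the whole argument is essentially one display of strong convexity plus a two-line induction.
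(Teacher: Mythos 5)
Your proposal is correct and takes essentially the same route as the paper: both arrive at the key recursion $\mathbb{E}[\|x_{t+1}-x_*\|^2] \leq (1-2c(n_a+n_b)\alpha_t)\,\mathbb{E}[\|x_t-x_*\|^2] + \alpha_t^2 M$ and close with the standard $Q/t$ induction (which the paper delegates to Nemirovski et al., while you write it out). The only cosmetic difference is how the strengthened contraction is obtained — you apply the quadratic-growth consequence of strong convexity, $S(x_t,\lambda_*)-S(x_*,\lambda_*)\ge \tfrac{c}{2}\|x_t-x_*\|^2$, to the gap term in \eqref{smooth_sc_part3_eq1}, whereas the paper uses strong monotonicity of $\nabla S(\cdot,\lambda_*)$ together with optimality of $x_*$ in \eqref{theorem2_eq22}; both arguments implicitly rely on the variational inequality $\nabla S(x_*,\lambda_*)^\top(x_t-x_*)\ge 0$ at the constrained minimizer.
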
 

\begin{proof} 
 Since the weighted-sum function $S(\cdot, \lambda_*)$ is strongly convex, one also has 
$$
(\nabla S(x, \lambda) - \nabla_x S(\bar{x}, \lambda) )^\top (x - \bar{x}) \;\geq\; c\|x - \bar{x}\|^2, \quad \forall (x, \bar{x}) \in \mathcal{X} \times \mathcal{X}.
$$
Letting $x = x_t$, $\bar{x} = x_*$, and $\lambda = \lambda_k$ in the above inequality leads to 
$$
\nabla S(x_t, \lambda_*)^\top (x_t - x_*) \;\geq\; c\|x_t - x_*\|^2,
$$
where we used the fact that $\nabla S(x_*, \lambda_* )^\top (x_t - x_*)
= (P_{\mathcal{X}} \nabla S(x_*, \lambda_* ))^\top (x_t - x_*) = 0$.
Hence,
\begin{equation}
\label{smooth_sc_iter_eq1}
(x_t - x_*)^\top(n_a \nabla f^a(x_t) + n_b \nabla f^b(x_t))  \;\geq\; c (n_a+n_b) \|x_t - x_*\|^2.
\end{equation}

 Plugging~\eqref{part2_eq8} and~\eqref{smooth_sc_iter_eq1} in~\eqref{theorem2_eq22} results in
$$
\mathbb{E}[\|x_{t+1}- x_*\|^2] \;\leq\; (1 -  2c(n_a + n_b)\alpha_t)\mathbb{E}[\| x_{t}- x_*\|^2] + \alpha_t^2 M.
$$
Using $\alpha_t = \gamma/t$ with $\gamma > 1/(2c(n_a + n_b))$ and
the induction argument in~\cite[Eq.~(2.9) and (2.10)]{ANemirovski_2009} lead us to the result.
\end{proof}  
 
\subsection{The non-smooth and strongly convex case}
\label{smooth_agd}

In this section, we analyze the convergence rates for stochastic alternating bi-objective descent (SA2GD) when subgradients are used in Algorithm~\ref{alg1_SA2GD}. Instead of using gradients, the notation $g^i(x, \xi), \forall i \in \{a, b\}$, denotes now the individual stochastic subgradient generated using a random variable $\xi$. Let $\partial f^i(x)$ be the subdifferential at $x$. Next, we describe the assumptions under which SA2GD will be analyzed (in addition to Assumption~\ref{ass:limit_pts}). We formalize first a classical assumption of Lipschitz continuity of both objective functions, which is often satisfied in practice. 

\begin{assumption}
\label{ass_LipCont_func}
\textbf{(Lipschitz continuous functions)} Both objective functions are Lipschitz continuous with Lipschitz constants $\hat{L}_i > 0$, $i \in \{a, b\}$, i.e.,
\begin{equation*}
|f^i(\bar{x}) - f^i(x)| \;\leq\; \hat{L}_i\|\bar{x} - x\|, \quad \forall (x,\bar{x}) \in \mathcal{X} \times \mathcal{X}.
\end{equation*}
\end{assumption}

We also impose the following two classical assumptions about stochastic subgradients. 

\begin{assumption}
\label{ass_subgradient}
For both objective functions $i \in \{a, b\}$, and all iterates $t \in \mathbb{N}$, the stochastic subgradients~$g^i(x_t, \xi_t)$ satisfy the following:
\begin{enumerate}
    \item[(a)]\textbf{(Unbiasedness)} 
    $\mathbb{E}_{\xi_t}[g^i(x_t, \xi_t)] \;\in\; \partial f^i(x_t)$. 
    \item[(b)](\textbf{Boundness}) There exist positive constants $\tilde{L}_i > 0$ such that $\mathbb{E}_{\xi_t}[\|g^i(x_t, \xi_t)\|^2] \leq \tilde{L}^2_i$. 
\end{enumerate}
\end{assumption}

The above assumptions are commonly used ones in classical stochastic subgradient methods~\cite[Section 8.3]{ABeck_2017}. For conciseness, we denote $\hat{L} = \max\{\hat{L}_a, \hat{L}_b\}$ and $\tilde{L} = \max\{\tilde{L}_a, \tilde{L}_b\}$. 

In addition to the above assumptions, we impose strong convexity in both objective functions.

\begin{assumption}
\label{ass_strong_convexity}
\textbf{(Strong convexity in the non-smooth case)} For both objective functions $i \in \{a, b\}$, there exists a scalar $\hat{c}_i > 0$ such that
\begin{equation*}
f^i(\bar{x}) \;\geq\; f^i(x) + g^i(x)^\top(\bar{x} - x) + \frac{\hat{c}_i}{2}\|\bar{x} - x\|^2, \quad \forall (x,\bar{x}) \in \mathcal{X} \times \mathcal{X},
\end{equation*}
for all subgradients $g^i(x) \in \partial f^i(x)$.
\end{assumption}

Based on the above two assumptions, the weighted-sum function $S(x, \lambda) = \lambda f^a(x) + (1-\lambda)f^b(x), \forall \lambda \in [0, 1]$, is also Lipschitz continuous with constant $\hat{L}$ and strongly convex with constant $\hat{c} = \min\{\hat{c}_a, \hat{c}_b\}$. Now, we are ready to show a similar convergence rate result as Theorem~\ref{th:smooth_strongcov}. Readers are referred to~\cite[Theorems~8.31~(a) and 8.37~(a)]{ABeck_2017} for the single-objective counterparts.

\begin{theorem} (\textbf{Sublinear convergence rate of SA2GD in the non-smooth and strongly convex case})
\label{th:nonsmooth_sc_convex}
Let Assumptions~\ref{ass:limit_pts} and~\ref{ass_LipCont_func}--\ref{ass_strong_convexity} hold and $x_*$ be the unique minimizer of the weighted function $S(\cdot, \lambda_*)$ in~$\mathcal{X}$, where $\lambda_* = \lambda(n_a, n_b) = n_a/(n_a + n_b)$. Choosing a diminishing step size sequence $\alpha_t = \frac{2}{\hat{c}(t+1)(n_a + n_b)}$, the sequence of iterates generated by the SA2GD algorithm satisfies
\begin{equation*}
    \min_{t = 1, \ldots,T} \mathbb{E}[S(x_t, \lambda_*)] - S(x_*, \lambda_*) \;\leq\; \frac{4}{\hat{c}(T + 1)} \left( 2\tilde{L}^2 +\hat{L}\tilde{L} + \hat{c}\Theta\tilde{L} \right).   
\end{equation*}
\end{theorem}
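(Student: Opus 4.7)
The plan is to follow the three-part structure of Theorem~\ref{th:smooth_strongcov}, replacing each use of Lipschitz continuity of the gradients by a combination of strong convexity evaluated at the intermediate points $w_t^a, w_t^b$ produced by the Intermediate Value Theorem and Lipschitz continuity of the objective functions themselves. As in the smooth case I would write $z_t = y^a_{n_a,t}$ and $\xi_t=\{\xi_t^0,\ldots,\xi_t^{n_a+n_b-1}\}$, and organize the argument into a bound on the iterates (Part~I), the use of strong convexity (Part~II), and a standard recursion (Part~III).

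Part~I is essentially identical to the smooth case. Expanding $\|x_{t+1}-x_*\|^2$ via the non-expansiveness of $P_\mathcal{X}$, using Assumption~\ref{ass_subgradient}(a) to pass to expected subgradients (which now lie in $\partial f^i(y_{r,t}^i)$ rather than being true gradients), and invoking Proposition~\ref{prop_ivt} on the continuous map $y \mapsto -2\alpha_t(x_t-x_*)^\top g^i(y)$ will yield the bundled expression $n_a g^a(w_t^a) + n_b g^b(w_t^b)$ with each $w_t^i$ a convex combination of $\{y_{r,t}^i\}_{r=0}^{n_i-1}$. The second-moment bound from Assumption~\ref{ass_subgradient}(b) replaces~\eqref{SGbound} and produces the analogue of~\eqref{theorem2_eq2} with the second-order term $2\alpha_t^2(n_a+n_b)^2\tilde{L}^2$ in place of $2\alpha_t^2(n_a+n_b)^2(G+\bar{G}M_\nabla)$.

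Part~II is where the non-smooth case genuinely diverges. Since Lipschitz continuity of the subgradients is unavailable, instead of transporting $g^i(w_t^i)$ back to $g^i(x_t)$ (the role of~\eqref{theorem2_eq4} in the smooth case) I would apply the strong convexity inequality at $w_t^i$ to obtain, for any $g^i(w_t^i)\in\partial f^i(w_t^i)$,
\begin{equation*}
g^i(w_t^i)^\top(x_t - x_*) \;\geq\; -\|g^i(w_t^i)\|\,\|x_t - w_t^i\| + f^i(w_t^i) - f^i(x_*) + \tfrac{\hat{c}_i}{2}\|w_t^i - x_*\|^2,
\end{equation*}
and then bridge back to $x_t$ in two elementary steps: the functional gap is transferred through Assumption~\ref{ass_LipCont_func} via $f^i(w_t^i)\geq f^i(x_t) - \hat{L}_i\|x_t-w_t^i\|$, and the quadratic is transferred via $\|w_t^i-x_*\|^2 \geq \|x_t-x_*\|^2 - 2\Theta\|x_t-w_t^i\|$, using the bounded diameter of Assumption~\ref{ass:limit_pts}. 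Multiplying by $n_i$, summing over $i\in\{a,b\}$ and taking expectations, the leading pieces reproduce the familiar $(n_a+n_b)[S(x_t,\lambda_*)-S(x_*,\lambda_*)]$ and $(\hat{c}/2)(n_a+n_b)\|x_t-x_*\|^2$, while the residual aggregates to an expression of the form $\sum_i n_i(\tilde{L}+\hat{L}_i+\hat{c}_i\Theta)\,\mathbb{E}_{\xi_t}[\|x_t-w_t^i\|]$. The displacement $\|x_t-w_t^i\|$ is then bounded by repeating the convex-combination computation of~\eqref{part2_eq5}--\eqref{part2_eq7}, now using $\mathbb{E}[\|g^i(y,\xi)\|]\leq\tilde{L}$ from Assumption~\ref{ass_subgradient}(b) in place of~\eqref{SGbound}, so that $\mathbb{E}_{\xi_t}[\|x_t-w_t^i\|]\leq \alpha_t(n_a+n_b)\tilde{L}$ and each residual term picks up a second factor $\alpha_t$.

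Part~III is then routine: substituting these bounds into the analogue of~\eqref{theorem2_eq22} produces a recursion of the shape~\eqref{smooth_sc_part3_eq1} with $\hat{c}$ in place of $c$ and $M = 2(n_a+n_b)^2(2\tilde{L}^2 + \hat{L}\tilde{L} + \hat{c}\Theta\tilde{L})$, and the diminishing step size $\alpha_t = 2/(\hat{c}(t+1)(n_a+n_b))$ allows the same weighted telescoping over $t=1,\ldots,T$ that concludes the smooth proof to deliver the claimed $\mathcal{O}(1/T)$ bound. The main obstacle will lie in Part~II: the Cauchy--Schwarz, Lipschitz-function and quadratic-expansion steps must all be arranged so that every residual term carries a factor $\|x_t-w_t^i\|$, and hence an extra $\alpha_t$, because otherwise the cross term $\hat{c}_i\Theta$ generated by applying strong convexity at $w_t^i$ rather than at $x_t$ would leak an $\mathcal{O}(\alpha_t)$ error and degrade the final rate.
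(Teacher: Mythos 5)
There is a genuine gap, and it sits exactly at the one step where the non-smooth case cannot mimic the smooth one. In your Part~I you propose to invoke Proposition~\ref{prop_ivt} on the map $y \mapsto -2\alpha_t(x_t-x_*)^\top g^i(y)$ to bundle the expected subgradients into $n_a g^a(w_t^a)+n_b g^b(w_t^b)$. But Proposition~\ref{prop_ivt} requires a \emph{continuous} real function, and in the non-smooth setting a subgradient selection (here, the map sending $y^i_{r,t}$ to the expected stochastic subgradient $\mathbb{E}_{\xi_t}[g^i(y^i_{r,t},\xi_t^{\cdot})]\in\partial f^i(y^i_{r,t})$) is in general discontinuous — think of $f(x)=|x|$ near the kink. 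So the IVT cannot be applied to it, and no point $w_t^i$ with the property $n_i\,(x_t-x_*)^\top g^i(w_t^i)=\sum_r (x_t-x_*)^\top (f^i)'(y^i_{r,t})$ and $g^i(w_t^i)\in\partial f^i(w_t^i)$ is produced; trying to repair this by interpolating the finitely many subgradient values would give a vector at $w_t^i$ that need not belong to $\partial f^i(w_t^i)$, so your Part~II strong-convexity inequality at $w_t^i$ could not be justified either. Since your entire Part~II is written in terms of that bundled $g^i(w_t^i)$, the gap propagates through the argument as you have arranged it.

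The paper's proof avoids this precisely by reversing the order of the two ingredients (a point it also remarks on after the proof): it first applies Assumption~\ref{ass_strong_convexity} at each intermediate point $y^i_{r,t}$, after splitting $x_t-x_* = (y^i_{r,t}-x_*)+(x_t-y^i_{r,t})$ and absorbing the second piece by Cauchy--Schwarz and the bound $\|(f^i)'(y^i_{r,t})\|\le\tilde L_i$, and only then invokes Proposition~\ref{prop_ivt} — applied to the genuinely continuous function $\psi^i(x)=f^i(x)-f^i(x_*)+\tfrac{\hat c}{2}\|x-x_*\|^2$ — to bundle the resulting \emph{function values} into $n_i\psi^i(w_t^i)$; see~\eqref{theorem_sa2sg_sc_eq3}. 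Everything downstream of that point in your plan does coincide with the paper: the transfer $f^i(w_t^i)\ge f^i(x_t)-\hat L_i\|x_t-w_t^i\|$, the expansion $\|w_t^i-x_*\|^2\ge\|x_t-x_*\|^2-2\Theta\|x_t-w_t^i\|$, the displacement bounds $\mathbb{E}_{\xi_t}[\|x_t-w_t^a\|]\le\alpha_t n_a\tilde L$ and $\mathbb{E}_{\xi_t}[\|x_t-w_t^b\|]\le\alpha_t(n_a+n_b)\tilde L$, the constant $M=2(n_a+n_b)^2(2\tilde L^2+\hat L\tilde L+\hat c\Theta\tilde L)$, and the Part~III telescoping are all as in the paper. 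So the fix is not a new idea but a reordering: keep the subgradients attached to the points $y^i_{r,t}$ where they are legitimately subgradients, use strong convexity there, and reserve the Intermediate Value Theorem for the continuous aggregate $\psi^i$.
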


\begin{proof}
Similar to the proof of Theorem~\ref{th:smooth_strongcov}, the proof is divided in three parts. In the first part, one obtains an upper bound on the norm of the iterates, $\mathbb{E}_{\xi_t}[\|x_{t+1} - x_*\|^2]$. Strong convexity is applied in the second part. We skip the third part as it is exactly the same as part III of Theorem~\ref{th:smooth_strongcov}. \vspace{1ex}

\noindent {\bf Part I: Bound on the iterates error.} 
At any iteration $t$, the sequence of stochastic subgradients is computed from drawing the sequence of random variables $\xi_t = \{\xi^0_t, \ldots, \xi^{n_a + n_b - 1}_t \}$. Replacing the gradients in~\eqref{theorem2_eq1} by the corresponding subgradients, one has
\begin{equation}
\label{theorem_sa2sg_sc_eq1}
\begin{split}
    \mathbb{E}_{\xi_t}[\|x_{t+1} - x_*\|^2]
    \;\leq\; & \|x_t - x_*\|^2 + \alpha_t^2\mathbb{E}_{\xi_t}[\|\sum_{r = 0}^{n_a - 1} g^a(y^a_{r, t}, \xi^r_t) + \sum_{r = 0}^{n_b - 1} g^b(y^b_{r, t}, \xi^{n_a + r}_t)\|^2] \\
    & - 2\alpha_t\mathbb{E}_{\xi_t}[(x_t - x_*)^\top(\sum_{r = 0}^{n_a - 1} (f^a)'(y^a_{r, t}) + \sum_{r = 0}^{n_b - 1} (f^b)'(y^b_{r, t}))],
\end{split}
\end{equation}
where $(f^a)'(x) \in \partial f^a(x)$ and $(f^b)'(x) \in \partial f^b(x)$ denote certain deterministic subgradients at $x$ for the two objectives respectively. 

The upper bound for the second moment of the sequence of stochastic subgradients \linebreak $\{g^a(y^a_{r, t}, \xi_t^r)\}_{r = 0}^{n_a - 1}$ and $\{g^b(y^b_{r, t}, \xi_t^{n_a + r})\}_{r = 0}^{n_b - 1}$ at each iteration $t$ can be derived as follows.
\begin{fleqn}[\parindent]
\begin{equation}
 \label{theorem_sa2sg_sc_eq2}
\begin{split}
  & \mathbb{E}_{\xi_t}[\|\sum_{r = 0}^{n_a - 1} g^a(y^a_{r, t}, \xi^r_t) + \sum_{r = 0}^{n_b - 1} g^b(y^b_{r, t}, \xi^{n_a + r}_t)\|^2] \\
  & \;\leq\;  2\mathbb{E}_{\xi_t}[\|\sum_{r = 0}^{n_a - 1} g^a(y^a_{r, t}, \xi^r_t)\|^2] + 2\mathbb{E}_{\xi_t}[\|\sum_{r = 0}^{n_b - 1} g^b(y^b_{r, t}, \xi^{n_a+r}_t)\|^2] \\
    & \;\leq\;  2n_a\sum_{r = 0}^{n_a - 1} \mathbb{E}_{\xi_t}[\|g^a(y^a_{r, t}, \xi^r_t)\|^2] + 2n_b\sum_{r = 0}^{n_b - 1} \mathbb{E}_{\xi_t}[\|g^b(y^b_{r, t}, \xi^{n_a + r}_t)\|^2]\\ 
   &  \;\leq\; 2(n_a + n_b)^2\tilde{L}^2,
\end{split}
\end{equation}
\end{fleqn}
where $\tilde{L} = \max(\tilde{L}_a, \tilde{L}_b)$. 

\vspace{2ex}
\noindent {\bf Part II: Using strong convexity and Intermediate Value Theorem.} 
In order to get a bound for the term $-2\alpha_t\mathbb{E}_{\xi_t}[(x_t - x_*)^\top\sum_{r = 0}^{n_a - 1} (f^a)'(y^a_{r, t})] -2\alpha_t\mathbb{E}_{\xi_t}[(x_t - x_*)^\top\sum_{r = 0}^{n_b - 1} (f^b)'(y^b_{r, t})]$, we will apply Assumption~\ref{ass_strong_convexity} multiple times. For simplicity, the expectation symbol is temporarily ignored,
\begin{equation}
\label{theorem_sa2sg_sc_eq3}
\begin{split}
    & -2\alpha_t (x_t - x_*)^\top \sum_{r = 0}^{n_a - 1} (f^a)'(y^a_{r, t})  \\ 
    \;=\;   & -2\alpha_t(x_t - x_*)^\top(f^a)'(x_t)  \\
    & - 2\alpha_t(y^a_{1, t} - x_*)^\top(f^a)'(y^a_{1, t})   -2\alpha_t(x_t - y^a_{1, t})^\top(f^a)'(y^a_{1, t}) \\
    & \cdots \\
    &  - 2\alpha_t(y^a_{n_a - 1, t} - x_*)^\top(f^a)'(y^a_{n_a - 1, t}) -2\alpha_t(x_t - y^a_{n_a - 1, t})^\top(f^a)'(y^a_{n_a - 1, t}) \\
    \;\leq\; & -2\alpha_t\sum_{r= 0}^{n_a - 1} (f^a(y_{r, t}^a)  - f^a(x_*) + \frac{\hat{c}}{2}\|y^a_{r, t} - x_*\|^2)  - 2\alpha_t \sum_{r=0}^{n_a - 1}(x_t - y^a_{r, t})^\top(f^a)'(y^a_{r, t}) \\
    \;\leq\; & -2\alpha_t(n_af^a(w_t^a) - n_a f^a(x_*)) - \hat{c}\alpha_tn_a\|w_t^a - x_*\|^2  + 2\alpha_t \sum_{r=0}^{n_a - 1}\|x_t - y^a_{r, t}\|\tilde{L}_a,
\end{split}
\end{equation}
where we applied Assumption~\ref{ass_subgradient}~(a) and
Assumption~\ref{ass_strong_convexity} to get the first inequality. The last inequality holds by applying Proposition~\ref{prop_ivt} to the quadratic function $\psi^a(x) = f^a(x) - f^a(x_*) + \frac{\hat{c}}{2}\|x - x_*\|^2$, the Cauchy–Schwarz inequality, and the combination of Assumption~\ref{ass_subgradient}~(b) and the Jensen's inequality $\|\mathbb{E}_{\xi_t}[g^i(x_t, \xi_t)]\|^2 \leq \mathbb{E}_{\xi_t}[\|g^i(x_t, \xi_t)\|^2]$. Here, $w_t^a$ is a point in the convex hull of $\{y^a_{r, t}\}_{r=0}^{n_a - 1}$. 

The second term $\|w_t^a - x_*\|^2$ on the right hand side can be handled using the triangle inequality, i.e.,
\begin{equation*}
    \begin{split}
        \|w_t^a - x_*\|^2 &  \;\geq\; (\|w_t^a - x_t\| - \|x_t - x_*\|)^2 \\
        & \;=\; \|x_t - w_t^a\|^2 + \|x_t - x_*\|^2 - 2\|x_t - w_t^a\|\|x_t - x_*\| \\
        & \;\geq\; \|x_t - x_*\|^2 - 2\|x_t - w_t^a\|\|x_t - x_*\|.
    \end{split}
\end{equation*}

Applying the above inequality to~\eqref{theorem_sa2sg_sc_eq3} and using Assumption~\ref{ass:limit_pts} result in
\begin{equation}
\label{theorem_sa2sg_sc_eq4}
    \begin{split}
        -2\alpha_t (x_t - x_*)^\top \sum_{r = 0}^{n_a - 1} (f^a)'(y^a_{r, t})  \;\leq\; & -2\alpha_t(n_af^a(w_t^a) - n_a f^a(x_*)) - \hat{c}\alpha_tn_a\|x_t - x_*\|^2 \\
        & + 2\hat{c}\Theta\alpha_tn_a\|x_t - w_t^a\| + 2\alpha_t \sum_{r=0}^{n_a - 1}\|x_t - y^a_{r, t}\|\tilde{L}_a.
    \end{split}
\end{equation}
Similarly, we can obtain
\begin{equation}
\label{theorem_sa2sg_sc_eq5}
\begin{split}
    -2\alpha_t (x_t - x_*)^\top \sum_{r = 0}^{n_b - 1} (f^b)'(y^b_{r, t}) \;\leq\; & -2\alpha_t(n_bf^b(w_t^b) - n_bf^b(x_*)) - \hat{c}\alpha_tn_b\|x_t - x_*\|^2 \\
    & + 2\hat{c}\Theta\alpha_tn_b\|x_t - w_t^b\| + 2\alpha_t \sum_{r=0}^{n_b - 1}\|x_t - y^b_{r, t}\|\tilde{L}_b,
\end{split}
\end{equation}
where $w_t^b$ is a point in the convex hull of $\{y^b_{r, t}\}_{r=0}^{n_b - 1}$. \vspace{1ex}

Plugging the inequalities~\eqref{theorem_sa2sg_sc_eq2}, \eqref{theorem_sa2sg_sc_eq4}-\eqref{theorem_sa2sg_sc_eq5} back in~\eqref{theorem_sa2sg_sc_eq1} and adding and subtracting $2\alpha_t(n_a f^a(x_t) + n_b f^b(x_t))$ on the right-hand side yield
\begin{equation}
\label{theorem_sa2sg_sc_eq6}
    \begin{split}
    \mathbb{E}_{\xi_t}[\|x_{t+1} - x_*\|^2]
    \;\leq\; &  (1-\hat{c}\alpha_t(n_a + n_b))\|x_t - x_*\|^2 + 2\alpha_t^2(n_a + n_b)^2\tilde{L}^2 \\
    & - 2\alpha_t(n_a f^a(x_t) + n_b f^b(x_t) - (n_af^a(x_*) + n_bf^b(x_*))) \\
    & - 2\alpha_t\mathbb{E}_{\xi_t}[n_af^a(w_t^a) - n_af^a(x_t) + n_bf^b(w^b_t) - n_bf^b(x_t)] \\
    & + 2\hat{c}\Theta\alpha_t (n_a\mathbb{E}_{\xi_t}[\|x_t - w_t^a\|] + n_b\mathbb{E}_{\xi_t}[\|x_t - w_t^b\|]) \\
    & + 2\alpha_t \tilde{L}(\sum_{r=0}^{n_a - 1}\mathbb{E}_{\xi_t}[\|x_t - y^a_{r, t} \|] + \sum_{r=0}^{n_b - 1}\mathbb{E}_{\xi_t}[\|x_t - y^b_{r, t}\|]) \\
    \;\leq\; &  (1-\hat{c}\alpha_t(n_a + n_b))\|x_t - x_*\|^2 + 2\alpha_t^2(n_a + n_b)^2\tilde{L}^2 \\
    & - 2\alpha_t(n_a f^a(x_t) + n_b f^b(x_t) - (n_af^a(x_*) + n_bf^b(x_*))) \\
    & + 2\hat{L}\alpha_t(n_a\mathbb{E}_{\xi_t}[\|x_t - w_t^a\|] + n_b\mathbb{E}_{\xi_t}[\|x_t - w^b_t\|]) \\
    & + 2\hat{c}\Theta\alpha_t (n_a\mathbb{E}_{\xi_t}[\|x_t - w_t^a\|] + n_b\mathbb{E}_{\xi_t}[\|x_t - w_t^b\|]) \\
    & + 2\alpha_t \tilde{L}(\sum_{r=0}^{n_a - 1}\mathbb{E}_{\xi_t}[\|x_t - y^a_{r, t} \|] + \sum_{r=0}^{n_b - 1}\mathbb{E}_{\xi_t}[\|x_t - y^b_{r, t}\|]),\\
    \end{split}
\end{equation}
where, in the second inequality, we applied Assumption~\ref{ass_LipCont_func} to the fourth term.

We now recall the bounds~\eqref{part2_eq3}--\eqref{part2_eq5} and \eqref{part2_eq6.5} on the distance from~$y^a_{r,t}$, $y^b_{r,t}$, $w_t^a$, and $w_t^b$ to~$x_t$. Applying expectation to these four bounds, followed by Assumption~\ref{ass_subgradient}~(b), 
gives us
\begin{equation} \label{part2_eqC}
\begin{split}
\mathbb{E}_{\xi_t}[\|x_t - y^a_{r,t}\|] \; \leq & \;\; \alpha_t n_a \tilde{L} \\
\mathbb{E}_{\xi_t}[\|x_t - y^b_{r,t}\|] \; \leq & \;\; \alpha_t (n_a+n_b) \tilde{L} \\
\mathbb{E}_{\xi_t}[\|x_t - w_t^a\|] \; \leq & \;\; \alpha_t n_a \tilde{L} \\
\mathbb{E}_{\xi_t}[\|x_t - w_t^b\|] \;\leq & \;\; \alpha_t (n_a+n_b) \tilde{L}.
\end{split}
\end{equation}
Finally, plugging~(\ref{part2_eqC}) into~(\ref{theorem_sa2sg_sc_eq6}) yields
\begin{equation*} \label{theorem_sa2sg_sc_eq12}
    \begin{split}
    \mathbb{E}_{\xi_t}[\|x_{t+1} - x_*\|^2]
    \;\leq\; &  (1-\hat{c}\alpha_t(n_a + n_b))\|x_t - x_*\|^2 + \alpha_t^2M \\
    & - 2\alpha_t(n_a f^a(x_t) + n_b f^b(x_t) - (n_af^a(x_*) + n_bf^b(x_*))),
    \end{split}
\end{equation*}
where $M = (4\tilde{L}^2 +2\hat{L}\tilde{L} + 2\hat{c}\Theta\tilde{L})(n_a + n_b)^2$. 
Now, we arrive at exactly the same form of upper bound as~\eqref{smooth_sc_part3_eq1} in the smooth case. Choosing the step size sequence $\alpha_t = \frac{2}{\hat{c}(t+1)(n_a + n_b)}$, the proof is then completed by the same derivation as the Part III of the proof for Theorem~\ref{th:smooth_strongcov}.
\end{proof}

In the above proof, since we could not use Lipschitz continuity of subgradients, we applied the Intermediate Value Theorem after (instead of before) using the strong convexity assumption. The resulting quadratic term $\|w_t^i - x_*\|^2$ from strong convexity was then merged into the $\|x_t - x_*\|^2$ term and the other alike $\mathcal{O}(\alpha_t^2)$ terms, converting the bound on the iterate error to what we had seen before in the smooth and strongly convex case. 

\subsection{The convex case}

If we replace strong convexity (Assumption~\ref{ass_strong_convexity})  by (simple) convexity (see below), we will observe a degradation on the convergence rate. Corollary~\ref{th:nonsmooth_convex} can be seen as a counterpart of the single objective cases~\cite[Theorems~8.30~(b) and 8.35~(b)]{ABeck_2017}.

\begin{assumption}
\label{ass_convexity}
\textbf{(Convexity in the non-smooth case)} For both objective functions $i \in \{a, b\}$, we have
\begin{equation*}
f^i(\bar{x}) \;\geq\; f^i(x) + g^i(x)^\top(\bar{x} - x), \quad \forall (x,\bar{x}) \in \mathcal{X} \times \mathcal{X},
\end{equation*}
for all subgradients $g^i(x) \in \partial f^i(x)$.
\end{assumption}

\begin{corollary} (\textbf{Sublinear convergence rate of SA2GD in the non-smooth and convex case})
\label{th:nonsmooth_convex}
Let Assumptions~\ref{ass:limit_pts}, \ref{ass_LipCont_func}--\ref{ass_subgradient}, and~\ref{ass_convexity} hold and $x_*$ be a minimizer of the weighted function $S(\cdot, \lambda_*)$ in~$\mathcal{X}$, where $\lambda_* = \lambda(n_a, n_b) = n_a/(n_a + n_b)$. Choosing a diminishing step size sequence
$\alpha_t = \frac{\bar{\alpha}}{\sqrt{t}(n_a + n_b)}$, where $\bar{\alpha}$ is any positive constant, the sequence of iterates generated by the SA2GD algorithm satisfies
\begin{equation*}
    \min_{t = 1, \ldots, T} \mathbb{E}[S({x_t, \lambda}_*)] - \mathbb{E}[S(x_*, \lambda_*)] \;\leq\; \frac{\frac{\Theta^2}{2\bar{\alpha}} + 4\bar{\alpha}\tilde{L}^2 +  2\bar{\alpha}\tilde{L}\hat{L}}{\sqrt{T}}.
\end{equation*}
\end{corollary}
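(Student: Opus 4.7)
The plan is to follow the proof of Theorem~\ref{th:nonsmooth_sc_convex} almost verbatim through Parts~I and~II, substituting convexity for strong convexity where needed, and then to replace Part~III with a different telescoping argument that accommodates the step size $\alpha_t = \bar{\alpha}/(\sqrt{t}(n_a + n_b))$. Part~I carries over unchanged: starting from~\eqref{theorem_sa2sg_sc_eq1} and the aggregated second-moment bound~\eqref{theorem_sa2sg_sc_eq2}, I obtain the same quadratic decomposition of $\mathbb{E}_{\xi_t}[\|x_{t+1} - x_*\|^2]$ as in the strongly convex non-smooth proof. In Part~II, instead of invoking Assumption~\ref{ass_strong_convexity}, I apply Assumption~\ref{ass_convexity} at each intermediate iterate $y^i_{r,t}$ to get $(f^i)'(y^i_{r,t})^\top (y^i_{r,t}-x_*) \geq f^i(y^i_{r,t}) - f^i(x_*)$, then bundle $\sum_r f^i(y^i_{r,t}) = n_i f^i(w^i_t)$ via Proposition~\ref{prop_ivt} applied directly to $f^i$. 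Adding and subtracting $f^i(x_t)$, using Lipschitz continuity of $f^i$ (Assumption~\ref{ass_LipCont_func}) and the expected deviation bounds~\eqref{part2_eqC}, I expect to arrive at the one-step inequality
\[
\mathbb{E}_{\xi_t}[\|x_{t+1} - x_*\|^2] \;\leq\; \|x_t - x_*\|^2 - 2\alpha_t(n_a + n_b)\bigl(S(x_t, \lambda_*) - S(x_*, \lambda_*)\bigr) + \alpha_t^2(n_a + n_b)^2\bigl(4\tilde{L}^2 + 2\hat{L}\tilde{L}\bigr),
\]
which is exactly the strongly convex recursion with the $\hat{c}$-contraction on $\|x_t - x_*\|^2$ removed.

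For Part~III, I would divide through by $2\alpha_t(n_a + n_b)$, take total expectation, and sum over $t = 1, \ldots, T$. Since $1/\alpha_t$ now depends on $t$, the simple telescoping used in the strongly convex proof fails; instead I plan to apply Abel summation to the term $\sum_t \frac{1}{2\alpha_t(n_a+n_b)}(\mathbb{E}[\|x_t - x_*\|^2] - \mathbb{E}[\|x_{t+1}-x_*\|^2])$. With $1/\alpha_t$ nondecreasing in $t$ and each $\mathbb{E}[\|x_t - x_*\|^2] \leq \Theta^2$ by Assumption~\ref{ass:limit_pts}, the positive differences of $1/\alpha_t$ collapse in closed form, bounding this sum by $\Theta^2/(2\alpha_T(n_a+n_b)) = \Theta^2 \sqrt{T}/(2\bar{\alpha})$. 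The remaining noise contribution is $\tfrac{(n_a+n_b)(4\tilde{L}^2 + 2\hat{L}\tilde{L})}{2}\sum_t \alpha_t$, and combining $\sum_{t=1}^T 1/\sqrt{t} \leq 2\sqrt{T}$ with the definition of $\alpha_t$ yields $\sum_t \alpha_t \leq 2\bar{\alpha}\sqrt{T}/(n_a+n_b)$, so this contribution is at most $\bar{\alpha}(4\tilde{L}^2 + 2\hat{L}\tilde{L})\sqrt{T}$. Using $\min_t \leq$ average and dividing by $T$ then produces the stated rate.

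The hard part will be avoiding a spurious $\log T$ factor that a naive telescoping with a variable $1/\alpha_t$ would introduce (as would arise from summing $\alpha_t^2 \sim 1/t$). The key observation that sidesteps this is that a uniform $\Theta^2$ bound on $\mathbb{E}[\|x_t - x_*\|^2]$ collapses the Abel-summed differences of $1/\alpha_t$ exactly to $1/\alpha_T - 1/\alpha_1$, so no $\sum_t 1/t$ ever enters the right-hand side. Once this trick is in place, the rest of the argument—the convexity-plus-IVT bundling, the Lipschitz correction, and the $\mathcal{O}(\alpha_t)$ deviation bounds from~\eqref{part2_eqC}—carries through exactly as in the non-smooth strongly convex case without any additional modification.
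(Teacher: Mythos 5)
Your proposal is correct and follows essentially the same route as the paper: drop the strong-convexity quadratic term in the bundling step of~\eqref{theorem_sa2sg_sc_eq3} (applying Proposition~\ref{prop_ivt} to $f^i$ itself), reuse the deviation bounds~\eqref{part2_eqC} to reach the recursion with $\hat{M} = (4\tilde{L}^2 + 2\hat{L}\tilde{L})(n_a+n_b)^2$, and then conclude with the standard variable-step-size telescoping. Your Part~III (Abel summation with the uniform $\Theta^2$ bound on $\mathbb{E}[\|x_t - x_*\|^2]$, plus $\sum_{t=1}^T 1/\sqrt{t} \leq 2\sqrt{T}$) is exactly the ``standard argument'' the paper defers to Theorem~5.3 of its cited reference, and it reproduces the stated constants.
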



\begin{proof}
The proof differs from the proof of Theorem~\ref{th:nonsmooth_sc_convex} in Parts II and III. When applying convexity to the second inequality of~\eqref{theorem_sa2sg_sc_eq3}, the term $\hat{c}\alpha_tn_a\|w^a_t - x_*\|^2$ does not appear on the right-hand side anymore, and we derive the bound of the iterate error
\begin{equation}
\label{theorem_sa2sg_cov_eq1}
    \begin{split}
    \mathbb{E}_{\xi_t}[\|x_{t+1} - x_*\|^2]
    \;\leq\; &  \|x_t - x_*\|^2 + 2\alpha_t^2(n_a + n_b)^2\tilde{L}^2 \\
    & - 2\alpha_t(n_a f^a(x_t) + n_b f^b(x_t) - (n_af^a(x_*) + n_bf^b(x_*))) \\
    & + 2\alpha_t\hat{L}\mathbb{E}_{\xi_t}[n_a\|x_t - w_t^a\| + n_b\|x_t - w^b_t\|] \\
    & + 2\alpha_t \tilde{L}(\sum_{r=0}^{n_a - 1}\mathbb{E}_{\xi_t}[\|x_t - y^a_{r, t} \|] + \sum_{r=0}^{n_b - 1}\mathbb{E}_{\xi_t}[\|x_t - y^b_{r, t}\|]).\\
    \end{split}
\end{equation}
Plugging~\eqref{part2_eqC} into~\eqref{theorem_sa2sg_cov_eq1} yields
\begin{equation*}
    \begin{split}
    \mathbb{E}_{\xi_t}[\|x_{t+1} - x_*\|^2]
    \;\leq\; &  \|x_t - x_*\|^2 + \alpha_t^2\hat{M} \\
    & - 2\alpha_t(n_a f^a(x_t) + n_b f^b(x_t) - (n_af^a(x_*) + n_bf^b(x_*))),
    \end{split}
\end{equation*}
where $\hat{M} = (4\tilde{L}^2+2\tilde{L}\hat{L} )(n_a + n_b)^2$. The proof is completed by applying the same standard arguments as in Theorem~5.3 of~\cite{SLiu_LNVicente_2019}.

\end{proof}

The non-smooth and convex case sheds light on the convergence behavior of the SAfairKM algorithm for the original non-convex and non-smooth bi-objective optimization problem~\cite{SLiu_LNVicente_2021}. 

For completeness, we conclude this section by the following corollary stating the convergence rate for the smooth and convex case without proof. The proof is also derived by getting rid of the quadratic term $-c\alpha_t(n_a + n_b)\|x_t - x_*\|^2$ in~\eqref{smooth_sc_part3_eq1} and then applying the standard arguments as Theorem~5.3 in~\cite{SLiu_LNVicente_2019}.

\begin{assumption}
\label{ass_smooth_convexity}
\textbf{(Convexity in the smooth case)} For both objective functions $i \in \{a, b\}$, we have
\begin{equation*}
f^i(\bar{x}) \;\geq\; f^i(x) + \nabla f^i(x)^\top(\bar{x} - x), \quad \forall (x,\bar{x}) \in \mathcal{X} \times \mathcal{X}.
\end{equation*}
\end{assumption}

\begin{corollary} (\textbf{Sublinear convergence rate of SA2GD in the smooth and convex case})
\label{th:smooth_convex}
Let Assumptions~\ref{ass:limit_pts}--\ref{ass:Lipschitz}, \ref{ass:gradient}, and~\ref{ass_smooth_convexity} hold and $x_*$ be a minimizer of the weighted function $S(\cdot, \lambda_*)$ in~$\mathcal{X}$, where $\lambda_* = \lambda(n_a, n_b) = n_a/(n_a + n_b)$. Choosing a diminishing step size sequence
$\alpha_t = \frac{\bar{\alpha}}{\sqrt{t}(n_a + n_b)}$, where $\bar{\alpha}$ is any positive constant, the sequence of iterates generated by the SA2GD algorithm satisfies
\begin{equation*}
    \min_{t = 1, \ldots, T} \mathbb{E}[S({x_t, \lambda}_*)] - \mathbb{E}[S(x_*, \lambda_*)] \;\leq\; \frac{\frac{\Theta^2}{2\bar{\alpha}} + 2\bar{\alpha} \hat{G}^2 + 2 \bar{\alpha}L\Theta \hat{G}}{\sqrt{T}}.
\end{equation*} 
\end{corollary}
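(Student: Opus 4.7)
The plan is to mirror the structure of the proof of Theorem~\ref{th:smooth_strongcov}, recycling Parts~I and~II verbatim (since neither the iterate bound nor the Intermediate Value Theorem aggregation used strong convexity), and then replace strong convexity by Assumption~\ref{ass_smooth_convexity} in Part~III.

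First, I would restart from inequality~\eqref{theorem2_eq22}, which only invokes unbiasedness, the moment bound, non-expansiveness of the projection, and the IVT application that produces $w_t^a,w_t^b$. The upper bound~\eqref{part2_eq8} controlling the last two terms (the ones measuring the deviation of gradients at $w_t^i$ from those at $x_t$) carries over unchanged, giving a contribution of order $2\alpha_t^2 L\Theta(n_a+n_b)^2 \hat{G}$. At this point the only ingredient in the previous proof that truly needs to change is~\eqref{theorm2_eq3}: instead of the strong convexity lower bound, I would use simple convexity of the weighted-sum $S(\cdot,\lambda_*)$, giving
\[
(x_t - x_*)^\top(n_a \nabla f^a(x_t) + n_b \nabla f^b(x_t))
\;\geq\; (n_a+n_b)\bigl(S(x_t,\lambda_*)-S(x_*,\lambda_*)\bigr).
\]
Plugging this and~\eqref{part2_eq8} back into~\eqref{theorem2_eq22} and taking total expectation yields the convex analogue of~\eqref{smooth_sc_part3_eq1},
\[
\mathbb{E}[\|x_{t+1} - x_*\|^2]
\;\leq\; \mathbb{E}[\|x_t - x_*\|^2]
- 2\alpha_t(n_a+n_b)\bigl(\mathbb{E}[S(x_t,\lambda_*)]-S(x_*,\lambda_*)\bigr)
+ \alpha_t^2 M,
\]
where $M = 2(n_a+n_b)^2(\hat{G}^2 + L\Theta \hat{G})$, i.e., exactly the right-hand side of~\eqref{smooth_sc_part3_eq1} but without the strongly-convex contraction $-c\alpha_t(n_a+n_b)\|x_t-x_*\|^2$.

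From here the argument becomes the standard SGD-in-the-convex-case averaging. Rearranging and using $\alpha_t = \bar{\alpha}/(\sqrt{t}(n_a+n_b))$, I would divide by $2\alpha_t(n_a+n_b)$ and sum over $t=1,\ldots,T$, telescoping the iterate-norm terms to produce an initial error bounded by $\Theta^2$ via Assumption~\ref{ass:limit_pts}. The residual sum $\sum_{t=1}^T \alpha_t$ in the denominator grows like $\bar{\alpha}\sqrt{T}/(n_a+n_b)$ (using $\sum_{t=1}^T 1/\sqrt{t} \geq \sqrt{T}$), while the error term $\sum_{t=1}^T \alpha_t^2 M$ grows like $\bar{\alpha}^2 M \ln(T)/(n_a+n_b)^2$ if one is not careful, so the cleanest route is the one used in Theorem~5.3 of~\cite{SLiu_LNVicente_2019}: weight each inequality by $1/\alpha_t$ appropriately so that the $\alpha_t^2$ terms become $\alpha_t$ terms, and then bound $\min_t$ by the weighted average. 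After substituting $M$ and simplifying $M/(n_a+n_b)^2 = 2(\hat{G}^2 + L\Theta\hat{G})$, the final bound collapses to $(\Theta^2/(2\bar{\alpha}) + 2\bar{\alpha}\hat{G}^2 + 2\bar{\alpha} L\Theta \hat{G})/\sqrt{T}$ as claimed.

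The main obstacle is mostly bookkeeping rather than substance: one must track that the $(n_a+n_b)$ factors cancel correctly between the gradient-effort sum $n_a\nabla f^a + n_b \nabla f^b$ (which produces a factor $(n_a+n_b)$ in the linear term), the step size denominator (which absorbs another $(n_a+n_b)$), and the $M$-term (which carries $(n_a+n_b)^2$). Because Parts~I--II are reused verbatim and Part~III is the textbook non-strongly-convex SGD averaging, no new technical machinery beyond what already appears in the excerpt is required, which is consistent with the author's remark that the result follows by dropping the $-c\alpha_t(n_a+n_b)\|x_t-x_*\|^2$ term in~\eqref{smooth_sc_part3_eq1} and invoking the standard argument of~\cite{SLiu_LNVicente_2019}.
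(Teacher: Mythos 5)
Your proposal is correct and matches the paper's intended argument: the paper itself proves this corollary only by remarking that one drops the contraction term $-c\alpha_t(n_a+n_b)\|x_t-x_*\|^2$ from~\eqref{smooth_sc_part3_eq1} (i.e., replaces~\eqref{theorm2_eq3} by the plain convexity bound, Parts~I--II being unaffected) and then applies the standard weighted/averaging argument of Theorem~5.3 in~\cite{SLiu_LNVicente_2019}, which is exactly your route, including your correct observation that the naive summation would introduce a spurious $\ln T$ and that dividing each inequality by $2\alpha_t(n_a+n_b)$ before summing (bounding the resulting non-telescoping iterate terms by $\Theta^2$ via Assumption~\ref{ass:limit_pts}) yields the stated constants.
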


As a final remark, the given upper bounds in Theorems~\ref{th:smooth_strongcov}-\ref{th:nonsmooth_sc_convex} and Corollaries~\ref{th:nonsmooth_convex}-\ref{th:smooth_convex} also hold for the aggregated iterate $\bar{x}_T = \textstyle \sum_{t=1}^Ttx_t/\sum_{t=1}^Tt$ in the strongly convex case and $\bar{x}_T = \textstyle\sum_{t=1}^Tx_t/T$ in the convex case. Such claims follow from the following Jensen's inequalities
$$S\left(\tfrac{\sum_{t=1}^Ttx_t}{\sum_{t=1}^Tt}, \lambda_*\right) \;\leq\; \tfrac{\sum_{t=1}^TtS(x_t, \lambda_*)}{\sum_{t=1}^T t} \quad \text{ and } \quad S\left(\tfrac{\sum_{t=1}^Tx_t}{T}, \lambda_* \right) \;\leq\; \tfrac{\sum_{t=1}^TS(x_t, \lambda_*)}{T}.$$

\section{A numerical experiment to illustrate the scalarization by optimization effort}

We now present an illustration of the behavior of the SA2GD method in what regards its ability to determine the whole Pareto front when applied multiple times over a discretization of the optimization effort scalars~$n_a$ and~$n_b$.
For simplicity we have applied SA2GD to deterministic problems, thus using full batch gradients.
Four deterministic bi-objective problems were selected from~\cite{ALCustodio_etal_2011} involving only simple bound constraints. 
We will compare the scalarization effort of SA2GD to the one of the weighted-sum method, where each scalarized problem
$\min_{x \in \mathcal{X}} \lambda f^a(x) + (1-\lambda) f^b(x)$, $\lambda \in (0,1)$, is solved by the same gradient descent methodology. For both approaches, we use a fixed step size $10^{-3}$ at each iteration. The starting point for each run is randomly generated within the feasible region.

Figure~\ref{Pareto_fronts} illustrates the approximated Pareto fronts obtained from both the SA2GD algorithm and the weight-sum algorithm for the selected problems. SA2GD was ran $201$ times with $n_a  + n_b = 200$ and $n_a \in \{0, ..., 200\}$. Weighted-sum was also run $201$ times with $\lambda \in \{0,1/200,\ldots,1\}$.
In both cases, we stopped each algorithmic run after $300$~iterations. 
Regardless of the shape of Pareto front (convex, concave, or disconnected), one observes that SA2GD performs similarly to the weight-sum approach in terms of capturing well-spread Pareto fronts from scalarization.

\begin{figure}[h]
   \centering
   \subfloat[][Problem MOP1.]{\includegraphics[width=.45\textwidth]{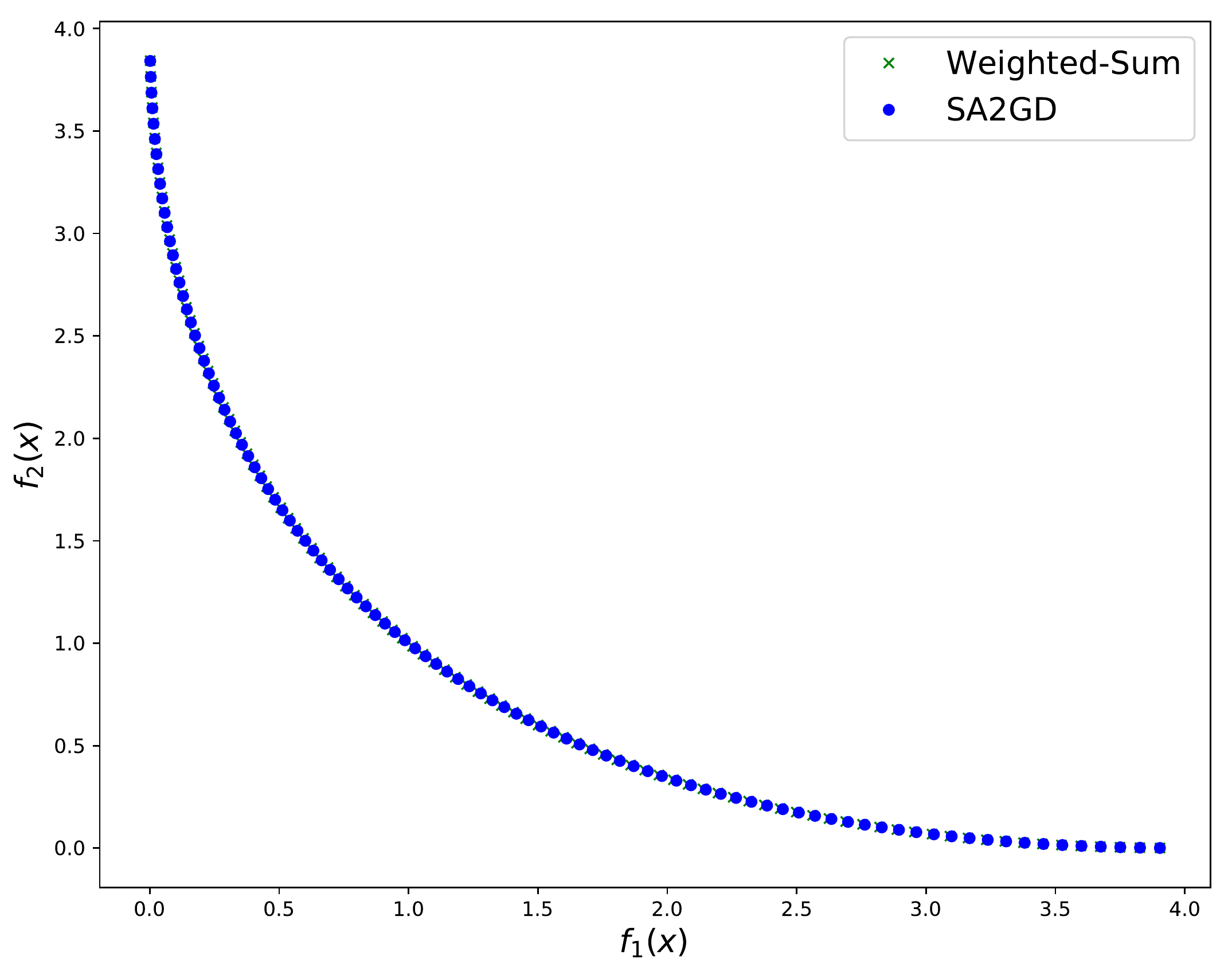}}\quad
   \subfloat[][Problem IM1.]{\includegraphics[width=.45\textwidth]{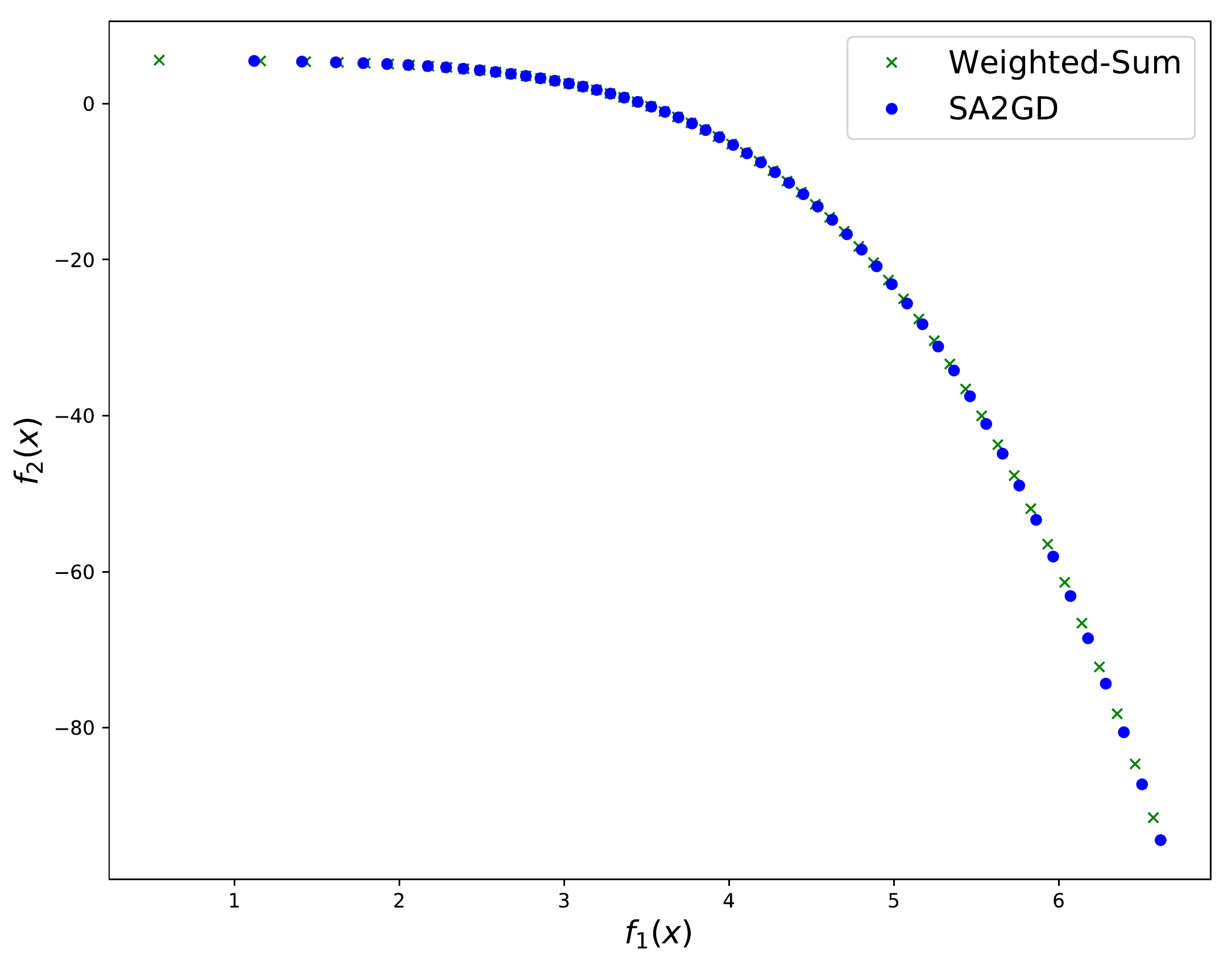}}\\
   \subfloat[][Problem MOP3.]{\includegraphics[width=.45\textwidth]{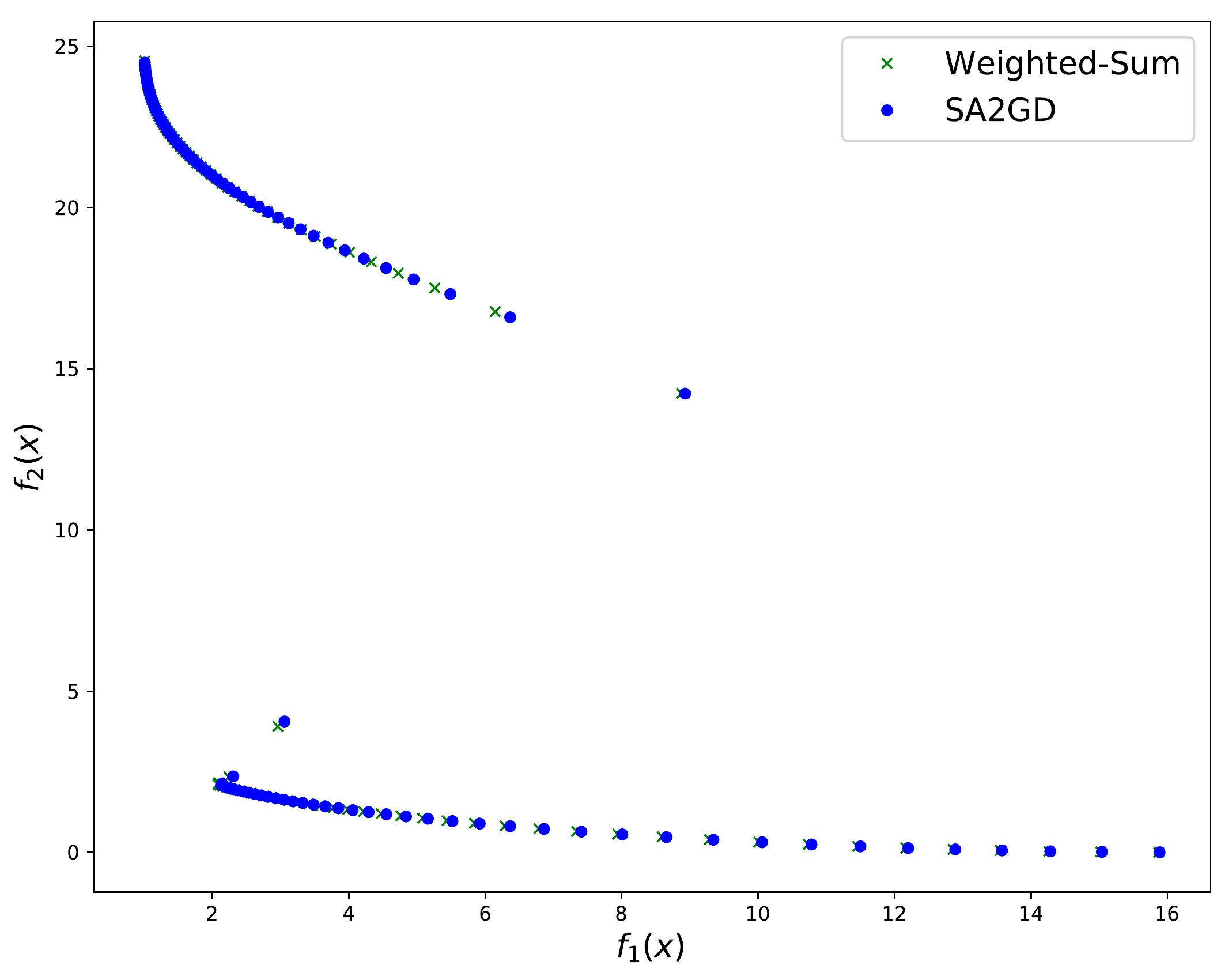}}\quad
   \subfloat[][Problem FAR1.]{\includegraphics[width=.45\textwidth]{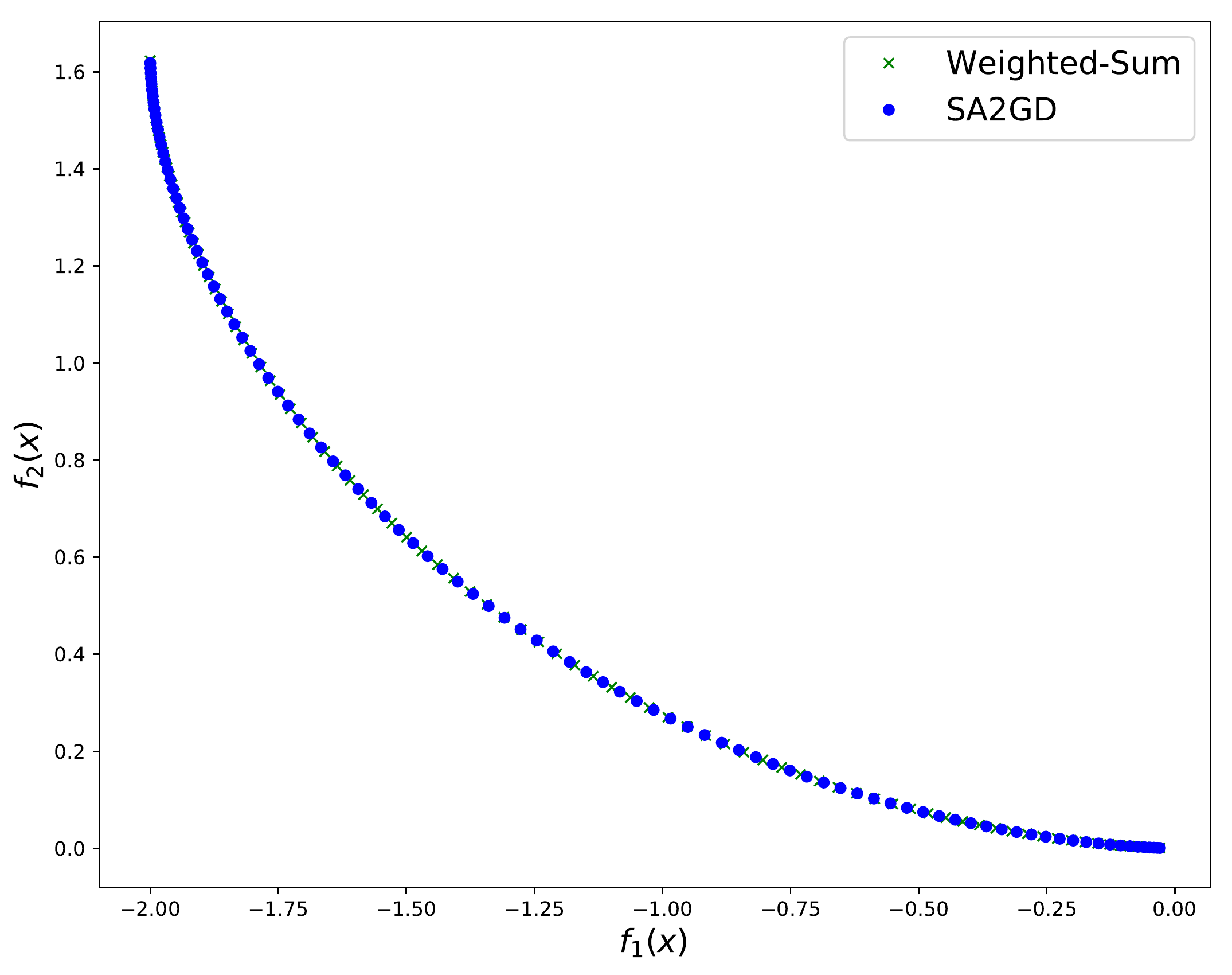}}
   \caption{An illustration of the scalarization by optimization effort of SA2GD when compared to the weighted-sum method.}
   \label{Pareto_fronts}
\end{figure}

\section{Conclusions}
\label{conclude}
We have shown that stochastic alternating bi-objective gradient or subgradient descent is convergent, achieving convergence rates
aligned with what is known for single-objective gradient (or subgradient) methods. The rates are $\mathcal{O}(1/\sqrt{T})$ for convex functions and $\mathcal{O}(1/T)$ for strongly convex functions, and they hold for both smooth and non-smooth functions. 

The analysis considered the case where all gradient (or subgradient) steps are applied first in a block to one function and then to the other also in block. We ingeniously applied the Intermediate Value Theorem to bundle all steps taken on each of the functions, a process that led to an implicit scalarization $(n_af^a+n_bf^b)/(n_a+n_b)$ of the two functions. It is however possible, following a similar proof logic, to show that the convergence rates are maintained regardless of the order according to which the gradient (subgradient) steps for the two objectives are executed. In fact, we can even, within a single iteration, randomly select the~$n_a$ positions where steps are applied to~$f^a$ from the $n_a + n_b$ total number of steps without impacting the rate of convergence. 

Furthermore, the proposed stochastic alternating optimization framework and theory could be generalized to more than two conflicting objectives. By varying the effort put in separately optimizing each objective within a convex linear combination, one can capture a trade-off among the multiple objectives.

In practice, many machine learning models lead to non-convex optimization problems. Non-convexity is present in the original bi-objective fair $k$-means clustering problem mentioned in the Introduction. A meaningful convergence guarantee for the non-convex case is certainly a topic for future research. 

\appendix
\section{Proposition using Intermediate Value Theorem}
\label{sec_prop_ivt}

Based on the Intermediate Value Theorem, we derive the following proposition for the purpose of convergence rate analysis of the SA2GD algorithm.
\begin{proposition} \label{prop_ivt}
Given a continuous real function $\phi(x): \mathbb{R}^n \to \mathbb{R}$ and a set of points $\{x_j\}_{j =1}^m$, there exists $w \in \mathbb{R}^n$ such that 
\[m\phi(w) = \sum_{j = 1}^m \phi(x_j),\]
where~$w = \sum_{j = 1}^m \mu_j x_j,\mbox{ with }\sum_{j = 1}^m \mu_j = 1, \mu_j \geq 0$, $i = 1, \ldots, m$, is a convex linear combination of $\{x_j\}_{j =1}^m$. 
\end{proposition}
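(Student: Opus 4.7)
The plan is to reduce the statement to the one-dimensional Intermediate Value Theorem by restricting $\phi$ to a line segment joining the two points at which $\phi$ attains its extreme values on the given finite set. This automatically produces a point $w$ that lies in the convex hull of $\{x_j\}_{j=1}^m$ (in fact on a single edge), so the convex-combination conclusion will come for free.

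First, I would form the arithmetic mean $\bar{\phi} = (1/m)\sum_{j=1}^m \phi(x_j)$ and record the obvious sandwich $\min_{j} \phi(x_j) \le \bar{\phi} \le \max_{j} \phi(x_j)$. Let $j_-, j_+ \in \{1,\dots,m\}$ be indices attaining this minimum and maximum, and define the continuous path $\gamma : [0,1] \to \mathbb{R}^n$ by $\gamma(t) = (1-t)\, x_{j_-} + t\, x_{j_+}$. The scalar function $h(t) := \phi(\gamma(t))$ is continuous on $[0,1]$ with $h(0) \le \bar{\phi} \le h(1)$.

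Next, by the classical one-dimensional Intermediate Value Theorem applied to $h$, there exists $t^\star \in [0,1]$ with $h(t^\star) = \bar{\phi}$. Setting $w := \gamma(t^\star) = (1-t^\star) x_{j_-} + t^\star x_{j_+}$ then gives $m\phi(w) = \sum_{j=1}^m \phi(x_j)$, and $w$ is evidently a convex combination of $x_{j_-}$ and $x_{j_+}$, which extends to a convex combination of all $\{x_j\}_{j=1}^m$ by assigning zero weight to every other index.

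I do not expect any real obstacle. The only edge case to mention is when all the values $\phi(x_j)$ coincide (so that $j_-$ and $j_+$ may be equal); then $h$ is constant on $[0,1]$ and the conclusion still holds trivially with, for instance, $w = x_{j_-}$. The argument uses only continuity of $\phi$ and convexity of the segment, matching exactly the hypotheses of the proposition.
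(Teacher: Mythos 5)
Your proof is correct, but it takes a different route from the paper. The paper builds $w$ by applying the Intermediate Value Theorem repeatedly in a pairwise, sequential fashion: first it finds $w_{12}$ on the segment $[x_1,x_2]$ with $\phi(w_{12}) = (\phi(x_1)+\phi(x_2))/2$, then aggregates $w_{12}$ with $x_3$ to match the average of the first three values, and so on until all $m$ points have been absorbed; the resulting $w$ is a convex combination that may put positive weight on every $x_j$. You instead observe that the arithmetic mean $\bar{\phi}$ is sandwiched between $\min_j \phi(x_j)$ and $\max_j \phi(x_j)$ and apply the one-dimensional IVT a single time along the segment joining the minimizing and maximizing points, then pad with zero weights. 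Your argument is shorter and slightly stronger in a structural sense (it shows $w$ can always be chosen on a single edge of the convex hull of $\{x_j\}$, i.e.\ as a convex combination of just two of the points), and the degenerate case $j_-=j_+$ is handled correctly. The paper's inductive construction offers no additional generality needed here --- the later use of the proposition (bounding $\|x_t - w_t^i\|$ via the weights $\beta_r$) only requires \emph{some} convex combination, which both constructions provide --- so either proof serves equally well; yours is the more economical of the two.
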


\begin{proof}
The proposition is obtained by consecutively applying the Intermediate Value Theorem to $\phi(x)$. First, for the pair of points $x_1$ and $x_2$, there exists a point $w_{12} = \mu_{12} x_1 + (1-\mu_{12}) x_2, \mu_{12} \in [0, 1]$, such that~$\phi(w_{12}) = (\phi(x_1) + \phi(x_2))/2$ according to the Intermediate Value Theorem, which implies that $\sum_{j = 1}^m \phi(x_j) = 2\phi(w_{12}) + \sum_{j = 3}^m \phi(x_j)$.  Then, there exists $w_{13} = \mu_{13} w_{12} + (1-\mu_{13}) x_3, \mu_{13} \geq 0$, such that $\phi(w_{13}) = (2\phi(w_{12}) + \phi(x_3))/3$ holds given that the average function value $(2\phi(w_{12}) + \phi(x_3))/3$ lies between $\phi(w_{12})$ and $\phi(x_3)$. Notice that~$w_{13}$ can also be written as convex linear combination of~$\{x_1, x_2, x_3\}$. The proof is concluded by continuing this process until $x_m$ is reached. 
\end{proof}

\small
\bibliographystyle{plain}
\bibliography{SA2SG}

\end{document}